\documentclass{amsart}
\usepackage{amsfonts}
\usepackage{amssymb}
\usepackage{amsmath}
\usepackage{amsthm}
\usepackage{moreverb}
\usepackage{fancybox}
\usepackage{fancyvrb}
\usepackage{comment}
\usepackage{color}
\usepackage{multirow}
\usepackage[all]{xy}

\theoremstyle{plain}
\newtheorem{theorem}{Theorem}[section]
\newtheorem{lemma}[theorem]{Lemma}

\newtheorem{cor}[theorem]{Corollary}
\newtheorem{prop}[theorem]{Proposition}

\newtheorem{question}{Question}

\newtheorem{fact}{Fact}
\newtheorem{obs}[theorem]{Observation}

\theoremstyle{definition}
\newtheorem{definition}[theorem]{Definition}

\theoremstyle{definition}
\newtheorem*{remark}{Remark}

\newtheorem*{ack}{Acknowledgement}

\newcommand{\upto}{\upharpoonright}
\newcommand{\fr}{\mbox{}^\smallfrown}

\newcommand{\om}{\omega}
\newcommand{\pcolon}{\colon\!\!\!\subseteq}

\newcommand{\subT}{\leq_{\rm subT}}
\newcommand{\eqsubT}{\equiv_{\rm subT}}

\newcommand{\Eff}{\mathcal{E}\!f\!f}

\newcommand{\embed}{\hookrightarrow}

\title{The subTuring degrees}
\author{}
\date{}
\author{Takayuki Kihara and Keng Meng Ng}

\begin{document}
\maketitle

\begin{abstract}
In this article, we introduce a notion of reducibility for partial functions on the natural numbers, which we call subTuring reducibility.
One important aspect is that the subTuring degrees correspond to the structure of the realizability subtoposes of the effective topos.
We show that the subTuring degrees (that is, the realizability subtoposes of the effective topos) form a dense non-modular (thus, non-distributive) lattice.
We also show that there is a nonzero join-irreducible subTuring degree (which implies that there is a realizability subtopos of the effective topos that cannot be decomposed into two smaller realizability subtoposes).
\end{abstract}

\section{Introduction}

\subsection{Background}
\label{sec:background}

The analysis of the degrees of non-computability is a central subject in computability theory \cite{Cooper,OdiBook}.
The key notion used to compare degrees of non-computability of {\em total} functions on the natural numbers is {\em Turing reducibility}, and a huge amount of research has been done on it.
The best known reducibility for {\em partial} functions on the natural numbes is Kleene's relative partial computability or nondeterministic Turing reducibility \cite[Chapter 11]{Cooper}, which coincides with {\em enumeration reducibility} for the graphs of partial functions.
Therefore, it has been widely believed that the study of degrees of partial functions can be absorbed into the theory of enumeration degrees --- another field that has developed greatly, like the theory of Turing degrees.

However, unlike the case of total functions, other candidates for the notion of reducibility for partial functions have been proposed.
Sasso \cite{Sas75} used a type of relative computation with sequential (non-parallel) access to an oracle to introduce a reducibility notion between partial functions.
This degree structure, which Sasso called the $T$-degrees of partial functions, has been little studied since then in degree theory, but the importance of Sasso's degree structure has been highlighted in a context quite different from that of degree theory.

In constructive mathematics, the pioneering work was done by Goodman \cite{Goo78}, who used a model of relative computation similar to Sasso's to show the so-called Goodman's theorem:
The system ${\sf HA}^\om$ of finite type Heyting arithmetic plus the axiom of choice ${\sf AC}$ is conservative over Heyting arithmetic ${\sf HA}$.
The proof uses the realizability interpretation relative to a generic partial choice function; see also \cite{Bee79,BeSl18}.
The notion of computability relative to a partial function used here is precisely the notion of {\em subTuring reducibility} (Definition \ref{def:subTuring}), which is a slight modification of Sasso's definition (first formally introduced by Madore \cite{Mad12} and later by Kihara \cite{Kih22}).

The exact same reducibility has also been used in van Oosten's semantical proof of de Jongh's theorem \cite{vO91}, for example:
For a formula $A$ in intuitionistic predicate calculus ${\sf IQC}$, ${\sf IQC}$ proves $A$ iff ${\sf HA}$ proves every arithmetical substitution instance of $A$.
The proof of backward direction is done by replacing predicate symbols with mutually generic arithmetical formulas.
The tool used here is a pca-valued sheaf (pca stands for partial combinatory algebra).
Each pca used in the proof is of the form $K_1[f]$, where $K_1[f]$ represents a model of computability relative to a partial function $f$.
To be more precise, $K_1$ stands for Kleene's first algebra; that is, the standard model of computability on the natural numbers, and the relative computability is precisely subTuring reducibility.

Later, van Oosten \cite{vO99} used the same relative computability, which he called a dialogue, to introduce the type structure of finite type functionals based on partial functions.
Given a partial combinatory algebra $A$, the dialogue for a partial function $f$ yields a new algebra $A[f]$; see e.g.~\cite{vO06}.
As one of the notable results, Faber-van Oosten \cite[Corollary 2.11]{FavO14} showed that a realizability subtopos (that is, a realizability topos which is a subtopos) of the effective topos $\Eff$ is nothing but the realizability topos over $K_1[f]$ for some partial function $f$.
This shows that the structure of the subTuring degrees of partial functions on the natural numbers corresponds exactly to the structure of the realizability subtoposes of the effective topos $\Eff$.

In short, a modification of Sasso's model of sequential computation, i.e., subTuring reducibility, was essential in various works on constructive mathematics, realizability topos theory, etc.
For these reasons, Kihara \cite{Kih22} has isolated this notion of subTuring reducibility, and raised the analysis of its structure as an important problem.
The issue of structural analysis of subTuring degrees has also been raised by Madore \cite{Mad12}.

This article addresses the problem \cite{Mad12,Kih22} of analyzing the structure of the subTuring degrees (in other words, the structure of the realizability subtoposes of $\Eff$).
In this article, we only aim to perform a pure degree-theoretic analysis.
As a pure degree theory, our research complements the perspective on partial functions that classical computability theorists have overlooked for many years.
Our work provides novel techniques for computability theory and is interesting in its own right as a pure degree theory.
As a constructive mathematics, in the subsequent article \cite{KiNg}, we will apply the powerful techniques we have developed in this article to various model constructions in constructive mathematics.

\subsection{Summary of results}

In Section \ref{sec:introduction}, we introduce the notion of subTuring reducibility (Definition \ref{def:subTuring}).
In Section \ref{sec:basic}, we show that the subTuring degrees form a lattice (Proposition \ref{prop:subTuring-lattice}), but not $\sigma$-complete.
Indeed, no strictly increasing sequence of subTuring degrees has a supremum (Proposition \ref{prop:no-strictly-increasing}).
Similarly, no strictly decreasing sequence of subTuring degrees has an infimum (Proposition \ref{prop:no-strictly-decreasing}).
We introduce the notion of subTuring jump, and we show that the subTuring jump has a fixed point (Theorem \ref{thm:jump-fixed-point}).
In Section \ref{sec:theorem}, we show that the subTuring degrees are dense (Theorem \ref{thm:subTuring-dense}); indeed, any nonempty interval of subTuring degrees has continuum many incomparable degrees (Theorem \ref{thm:dense-continuum}).
We introduce the notion of quasiminimal degree, and we prove the quasiminimal jump inversion theorem for subTuring degrees; that is, any subTuring degree above the halting problem is the subTuring jump of a quasiminimal subTuring degree (Theorem \ref{thm:quasiminimal-jump-inversiona}).
We show that the subTuring meet of partial functions cannot be the degree of a total function (Theorem \ref{thm:no-minimal-pair}).
In particular, no minimal pair of subTuring degrees of total functions exists (Corollary \ref{cor:no-minimal-pair}).
We show that the lattice of the subTuring degrees is not distributive (Theorem \ref{thm:subTuring-nondistributive}); indeed, not modular (Theorem \ref{thm:subTuring-nonmodular}).
Hence, the realizability subtoposes of the effective topos form a dense, non-distributive, non-modular lattice.
We finally show that there is a nonzero join-irreducible subTuring degree (Theorem \ref{thm:subTuring-join-irreducible}), which implies that there is a realizability subtopos of the effective topos that cannot be decomposed into two smaller realizability subtoposes (with respect to the ordering by geometric inclusions).

\section{Preliminaries}\label{sec:introduction}

\subsection{Notations}
For the basics of computability theory, see \cite{Cooper,OdiBook}.
Let $\om$ denote the set of all natural numbers, and $\om^{<\om}$ denote the set of all finite strings on $\om$.
We use the notation $f\pcolon X\to Y$ to indicate that $f$ is a partial function from $X$ to $Y$; that is, the domain of $f$ is a subset of $X$ and the codomain is $Y$.
Here, the domain, ${\rm dom}(f)$, is the set of all $x\in X$ such that $f(x)$ is defined.
If $x\in{\rm dom}(f)$, we often write $f(x)\downarrow$; otherwise $f(x)\uparrow$.
For a set $A\subseteq X$, let $f\upto A$ denote the restriction of $f$ up to $A$; that is, ${\rm dom}(f\upto A)={\rm dom}(f)\cap A$, and $(f\upto A)(x)=f(x)$ for any $x\in{\rm dom}(f\upto A)$.

\subsection{Definition}

In Sasso's model of relative computation, access to an oracle is always sequential, so parallel access is not allowed.
In other words, if we make a query to an oracle during a computation, we must wait until the oracle replies.
However, if the oracle is a partial function, it is possible that the oracle will not return forever, in which case the computation will be stuck.
Therefore, a query we make must always be a value in the domain of the oracle if we want the computation to halt.

Let us formulate this Sasso's idea rigorously.
For a partial computable function $\Phi\pcolon\om^{<\om}\to 2\times\om$, the $g$-relative sequential computation $\Phi[g]$ is defined as follows:
\begin{enumerate}
\item Assume that $n$ is given as input, the oracle's responses $(a_0,\dots,a_{s-1})$ are given by the $s$th round, and the computation has not yet halted.
\item If $\Phi(n,a_0,\dots,a_{s-1})$ does not halt, then neither does $\Phi[g](n)$ (written $\Phi[g](n)\uparrow$).
Otherwise, we have $\Phi(n,a_0,\dots,a_{s-1})\downarrow=\langle i_s,q_s\rangle$.
\begin{enumerate}
\item If $i_s=1$, the computation halts and $q_s$ is the output of this computation.
\item If $i_s=0$, proceed to the next step (3).
\end{enumerate}
\item If $q_s\in{\rm dom}(g)$, the computation continues with $a_s=g(q_s)$.
If $q_s\not\in{\rm dom}(g)$, the computation never halts.
\end{enumerate}

If the computation arrives at (2a) above, we write $\Phi[g](n)\downarrow=q_s$.
If the computation goes to (2b), we call $q_s$ a query.
Note that this sequential computation has the following monotonicity:
$f\subseteq g$ implies $\Phi[f]\subseteq\Phi[g]$.

\begin{definition}\label{def:subTuring}
For partial functions $f,g\pcolon\om\to\om$, we say that {\em $f$ is subTuring reducible to $g$} (written $f\subT g$) if there exists a partial computable function $\Phi$ such that $f\subseteq\Phi[g]$; that is,  for any $n\in{\rm dom}(f)$ we have $\Phi[g](n)\downarrow=f(n)$.
\end{definition}

Roughly speaking, $f\subT g$ if there exists a Turing functional $\Phi$ which computes ${\Phi^g(n)\downarrow}=f(n)$ without making a query outside of ${\rm dom}(g)$, whenever an input $n$ is given from ${\rm dom}(f)$.
Hereafter, $\Phi[g]$ is sometimes written as $\Phi^g$.

\begin{remark}
Sasso's Turing reducibility $f\leq_Tg$ is defined as $f=\Phi[g]$; that is, it requires the condition ${\rm dom}(f)={\rm dom}(\Phi[g])$.
Thus, $f\subT g$ if and only if $f$ has an extension $\hat{f}$ with $\hat{f}\leq_Tg$.
Definition \ref{def:subTuring} is first proposed by Madore \cite{Mad12}.
\end{remark}

In particular, if we consider only total functions, it agrees exactly with the usual Turing reducibility.

\begin{obs}
If $g$ is total, then $g\leq_Tf$ if and only if $g\subT f$.
\end{obs}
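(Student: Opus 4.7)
The plan is to unwind definitions and observe that the only difference between $\subT$ and Sasso's $\leq_T$ (namely $f \subseteq \Phi[g]$ versus $f = \Phi[g]$) collapses when the function being reduced is total. Both directions then amount to one-line verifications.

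For the forward direction, I would suppose $g \leq_T f$, so that there is a partial computable $\Phi$ with $g = \Phi[f]$. Then $g \subseteq \Phi[f]$ holds trivially, which is precisely the condition $g \subT f$.

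For the reverse direction, suppose $g \subT f$, witnessed by some $\Phi$ with $g \subseteq \Phi[f]$. Since $g$ is total, $\dom(g) = \om$, and the inclusion forces $\Phi[f](n) \downarrow = g(n)$ for every $n \in \om$. Hence $\Phi[f]$ is itself total and coincides with $g$ as a (partial) function, giving $g = \Phi[f]$, i.e.\ $g \leq_T f$.

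No real obstacle arises here; the content of the observation is just that a total function admits no proper extension as a partial function, so the ``extension'' formulation defining $\subT$ degenerates to the ``equality'' formulation defining Sasso's $\leq_T$ whenever the reduced function is total. This is also what makes $\subT$ agree with the usual Turing reducibility on the total degrees, as noted in the sentence immediately preceding the observation.
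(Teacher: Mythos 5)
Your proof is correct and is the natural one-line unwinding of the definitions; the paper states this Observation without proof, and your argument (that a total $g$ admits no proper extension, so $g\subseteq\Phi[f]$ collapses to $g=\Phi[f]$) is exactly what the authors intend the reader to check.
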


\subsection{Basics}
Restricting to binary functions does not change the structure.
\begin{obs}
Any partial function $f\pcolon\om\to\om$ is subTuring equivalent to a partial binary function $G_f\pcolon \om\to 2$.
\end{obs}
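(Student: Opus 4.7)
The plan is to mimic the usual graph trick: define $G_f\pcolon\om\to 2$ on inputs of the form $\langle n,m\rangle$ by
\[
G_f(\langle n,m\rangle)=\begin{cases}1&\text{if }f(n)\downarrow=m,\\ 0&\text{if }f(n)\downarrow\neq m,\\ \uparrow&\text{if }f(n)\uparrow.\end{cases}
\]
(On inputs that are not pair codes, set $G_f$ undefined, or equivalently modify the pairing so every input decodes.) Crucially, $\langle n,m\rangle\in\dom(G_f)$ iff $n\in\dom(f)$, regardless of $m$, which is exactly what makes the sequential-query model work in both directions.

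For $G_f\subT f$, I would give a Turing functional $\Phi$ which, on input $\langle n,m\rangle$, issues the single query $n$ to the oracle. If $\langle n,m\rangle\in\dom(G_f)$ then by definition $n\in\dom(f)$, so the query is legal in the sense of Sasso's sequential model; receiving the reply $k=f(n)$, the functional outputs $1$ if $k=m$ and $0$ otherwise, matching $G_f(\langle n,m\rangle)$.

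For $f\subT G_f$, I would give a functional $\Psi$ which, on input $n$, queries the $G_f$-oracle sequentially at $\langle n,0\rangle,\langle n,1\rangle,\langle n,2\rangle,\ldots$ until a reply of $1$ is received, and then outputs the corresponding $m$. Whenever $n\in\dom(f)$, say $f(n)=m^\ast$, every query $\langle n,m\rangle$ made along the way lies in $\dom(G_f)$ (so no query stalls the sequential computation), the oracle returns $0$ for each $m<m^\ast$ and $1$ at $m=m^\ast$, so $\Psi[G_f](n)\downarrow=m^\ast=f(n)$.

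There is no real obstacle: the only point to watch is that the candidate queries $\langle n,0\rangle,\langle n,1\rangle,\ldots$ must all be in $\dom(G_f)$ before the correct one is hit, and this is precisely guaranteed by our choice to make $G_f$ undefined only when $f(n)$ itself is undefined. Together the two reductions give $f\eqsubT G_f$.
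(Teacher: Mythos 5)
Your proof is correct and follows essentially the same route as the paper: the same graph-coding definition of $G_f$, the same single-query reduction for $G_f\subT f$, and the same sequential scan over $\langle n,0\rangle,\langle n,1\rangle,\ldots$ for $f\subT G_f$, with the same key observation that $\langle n,m\rangle\in\dom(G_f)$ iff $n\in\dom(f)$ so no query stalls.
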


\begin{proof}
Let $G_f$ be the graph of $f$ in the following sense:
\[
G_f(n,m)=
\begin{cases}
1 & \mbox{ if }{f(n)\downarrow}=m\\
0 & \mbox{ if }{f(n)\downarrow}\not=m\\
\uparrow & \mbox{ if }f(n)\uparrow
\end{cases}
\]

For $G_f\subT f$, to compute $G_f(n,m)$, just ask for the value of $f(n)$, where $(n,m)\in {\rm dom}(G_f)$ implies $n\in{\rm dom}(f)$.
For $f\subT G_f$, to compute $f(n)$, ask for the values of $G_f(n,0),G_f(n,1),G_f(n,2),\dots$, where $n\in {\rm dom}(f)$ implies $(n,m)\in{\rm dom}(G_f)$ for any $m$.
If $n\in{\rm dom}(f)$ then the answer $G(n,k)=1$ will be returned at some point, so we can output $k$ at that time.
\end{proof}

A subTuring degree is {\em total} if it contains a total function.

\begin{obs}\label{obs:ce-domain}
If the domain of $f\pcolon\om\to\om$ is c.e., then $f$ has a total subTuring degree.
Similarly, if the domain of $f\pcolon \om\to\om$ is computable relative to $f$, i.e., $\chi_{{\rm dom}(f)}\subT f$, then $f$ has a total subTuring degree.
\end{obs}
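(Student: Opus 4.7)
The plan is, in each of the two cases, to exhibit a total function $\hat f\colon\om\to\om$ with $\hat f\eqsubT f$; by definition this makes the subTuring degree of $f$ total.

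For the first statement, I fix a computable enumeration $e_0,e_1,e_2,\dots$ of $\dom(f)$ (trivially extending with a fixed element when $\dom(f)$ is finite, and taking $\hat f\equiv 0$ if $\dom(f)=\emptyset$) and set $\hat f(k)=\langle e_k,f(e_k)\rangle$. This is total because each $e_k\in\dom(f)$. For $\hat f\subT f$, on input $k$ the procedure computes $e_k$ with no oracle call, then makes the single query $e_k$ to $f$; this query is legal since $e_k\in\dom(f)$, so by the sequential-computation rule the computation does not get stuck, and the procedure returns $\langle e_k,f(e_k)\rangle$. For $f\subT\hat f$, on input $n\in\dom(f)$ the procedure queries $\hat f(0),\hat f(1),\dots$ (every query valid since $\hat f$ is total) until it finds the first $k$ with first coordinate $n$ and outputs the second coordinate; such a $k$ exists precisely by the c.e.\ enumeration of $\dom(f)$.

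For the second statement, I set $\hat f(n)=2f(n)+1$ when $n\in\dom(f)$ and $\hat f(n)=0$ otherwise, which is total. Given the hypothesis $\chi_{\dom(f)}\subT f$, the reduction $\hat f\subT f$ first runs that reduction on $n$ — it makes only queries inside $\dom(f)$ and returns $\chi_{\dom(f)}(n)$ — and then, if the answer is $1$, queries $f(n)$ (legal) and outputs $2f(n)+1$, while if the answer is $0$ it outputs $0$. Conversely $f\subT\hat f$ is immediate: on input $n\in\dom(f)$, query the total $\hat f$ at $n$ and extract $f(n)$ from the odd value $2f(n)+1$.

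There is no real obstacle; both constructions are short transformations. The only point requiring any care is the legality of oracle queries: in the forward direction of the first case the queries to $f$ must be confined to $\dom(f)$, which is why we use the precomputed witnesses $e_k$; and in the second case the role of the assumption $\chi_{\dom(f)}\subT f$ is precisely to supply a query-safe membership test for $\dom(f)$ before attempting to query $f(n)$.
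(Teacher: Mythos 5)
The paper states this observation without proof, so there is nothing in the source to compare against. Your argument is correct and is the natural way to fill in the omitted details: in both cases you exhibit a total $\hat f\eqsubT f$, and in each direction you carefully confine oracle queries to $\dom(f)$ (via the precomputed witnesses $e_k$ in the c.e.\ case, and via the hypothesized membership test $\chi_{\dom(f)}\subT f$ in the second case), which is exactly the point one must check under subTuring reducibility.
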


First, the minimum requirement for a reducibility notion is that it be preordered.

\begin{obs}
$\subT$ is transitive.
\end{obs}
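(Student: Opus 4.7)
The plan is to prove transitivity by composing the two witnessing functionals. Suppose $f \subT g$ via a partial computable $\Phi$ and $g \subT h$ via a partial computable $\Psi$. I would construct a partial computable $\Theta$ that, on input $n$, simulates the sequential computation of $\Phi[g](n)$, except that whenever $\Phi$ reaches case (2b) with some query $q_s$, the construction internally launches a sub-simulation of $\Psi[h](q_s)$ to produce the response $a_s$ that is then fed back to $\Phi$. Concretely, $\Theta$ is defined on sequences so that the incoming responses (which are meant to come from $h$) are routed to whichever $\Psi$-sub-simulation is currently active; when that sub-simulation hits case (2a) of its own definition and produces an output $v$, the value $v$ is passed to $\Phi$ as $a_s$, and $\Theta$ continues by computing $\Phi$'s next query (or halting if $\Phi$ now hits (2a)).

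To verify $f \subseteq \Theta[h]$, fix $n \in \dom(f)$. Since $\Phi$ witnesses $f \subT g$, we have $\Phi[g](n)\downarrow = f(n)$, so every query $q_s$ that $\Phi$ issues during this computation lies in $\dom(g)$. For each such $q_s$, since $\Psi$ witnesses $g \subT h$ and $q_s \in \dom(g)$, we have $\Psi[h](q_s)\downarrow = g(q_s)$, and moreover every query made by $\Psi$ on input $q_s$ lies in $\dom(h)$. Hence the sub-simulations all terminate with the correct values and only query inside $\dom(h)$, so $\Theta[h](n)$ faithfully reproduces $\Phi[g](n)$ step by step and halts with output $f(n)$.

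The only real point to watch is the domain-closure condition built into the definition of $\subT$: the sequential model aborts whenever a query falls outside the oracle's domain, so one must check that the composite $\Theta[h]$ never gets stuck on an input from $\dom(f)$. As noted above, this follows immediately from chaining the two guarantees, $\Phi[g](n)\downarrow$ and $\Psi[h](q_s)\downarrow$ for each $q_s$. I do not anticipate any further obstacle; the argument is essentially the standard composition of relative computations, with care taken that, because of the sequential-access semantics, the composition terminates exactly on the set where both stages are guaranteed to terminate.
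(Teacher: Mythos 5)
Your argument is correct and is essentially the paper's: compose the two witnessing functionals and check that the composite never queries outside $\dom(h)$ on inputs from $\dom(f)$. The paper packages the domain-closure bookkeeping into a prior observation that sequential computation is monotone ($g \subseteq \Psi[h]$ gives $\Phi[g] \subseteq \Phi[\Psi[h]]$), then simply notes that $\Phi[\Psi[\cdot]]$ is realized by a single partial computable functional, whereas you carry out that step-by-step verification explicitly; the content is the same.
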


\begin{proof}
Assume that $f\subT g\subT h$.
Then we get $f\subseteq\Phi[g]$ and $g\subseteq\Psi[h]$.
By monotonicity, we get $f\subseteq\Phi[g]\subseteq\Phi[\Psi[h]]$.
By composing the two sequential computations, we get $f\subT h$.
\end{proof}

\begin{obs}
The least subTuring degree contains of exactly those functions with a partial computable extension.
\end{obs}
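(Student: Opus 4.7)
The plan is to identify the least subTuring degree as the $\equiv_{\rm subT}$-class of the empty partial function $\emptyset$, and then to unfold what it means to be subTuring reducible to $\emptyset$. The claim then reduces to the equivalence ``$f \subT \emptyset$ iff $f$ has a partial computable extension.''

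First I would observe that $\emptyset$ lies at the bottom: for any $f$, the witness $\Phi$ that is nowhere defined gives $\Phi[f] = \emptyset \supseteq \emptyset$, so $\emptyset \subT f$. Hence a function lies in the least subTuring degree exactly when $f \subT \emptyset$.

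Next I would analyze the function $\Phi[\emptyset]$ for an arbitrary partial computable $\Phi$. Since $\dom(\emptyset) = \emptyset$, whenever the computation of $\Phi[\emptyset](n)$ reaches step (3) with some query $q_s$, the clause $q_s \notin \dom(\emptyset)$ applies and the computation diverges. Therefore $\Phi[\emptyset](n)\!\downarrow = m$ holds precisely when $\Phi(n)\!\downarrow = \langle 1, m\rangle$ at the very first round, with no intervening query. In particular $\Phi[\emptyset]$ is itself a partial computable function, uniformly in $\Phi$. So if $f \subT \emptyset$ via some $\Phi$, then $\Phi[\emptyset]$ is a partial computable extension of $f$.

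For the converse, given a partial computable extension $\psi = \varphi_e$ of $f$, I would take $\Phi$ to be the partial computable function that on input $n$ simulates $\varphi_e(n)$ and, if it halts with value $m$, returns $\langle 1, m\rangle$; this $\Phi$ never issues a query, so $\Phi[g] = \varphi_e$ for every $g$, and in particular $\Phi[\emptyset] = \varphi_e \supseteq f$, witnessing $f \subT \emptyset$.

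There is no real obstacle here; the argument is purely a definition chase. The only delicate point is the verification in the third paragraph that $\Phi[\emptyset]$ is partial computable, which boils down to observing that no nonempty oracle-answer history is compatible with the empty oracle, so the potentially branching sequential computation collapses to a query-free one.
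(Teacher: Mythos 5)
Your argument is correct. The paper states this observation without proof, so there is nothing to compare against; your reasoning is the natural unfolding of the definitions that the authors evidently left implicit. The key points you identify are sound: $\emptyset \subT f$ vacuously for all $f$, so the least degree is $\{g : g \subT \emptyset\}$; and $\Phi[\emptyset]$ is partial computable because the empty oracle forces any queried computation to diverge, collapsing $\Phi[\emptyset]$ to the query-free part of $\Phi$ acting on the initial input alone.
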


\subsection{Realizability theory}\label{sec:realizability-theory}
To emphasize the importance of subTuring degrees, as mentioned in Section \ref{sec:background}, this is not only related to pure computability theory, but also to realizability theory \cite{vOBook}.
The purpose of Section \ref{sec:realizability-theory} is to explain this connection, and since it will not be used in the remaining sections, readers who are only interested in pure computability theory may skip this section.

\begin{definition}
A {\em partial combinatory algebra} (pca) is a set equipped with a partial binary operation which is combinatory complete; see \cite{vOBook}.
For a partial function $f\pcolon\om\to\om$, we consider the pca $K_1[f]=(\om,\ast_f)$ of $f$-relative computability; that is, the partial binary operation $\ast_f\pcolon\om^2\to\om$ is defined as follows:
\[{e\ast_f n\downarrow}=m\iff {\Phi_e[f](n)\downarrow}=m,\]
where $\Phi_e$ is the $e$th partial computable function.
\end{definition}

Let ${\sf RT}(A)$ be the realizability topos induced by a pca $A$.

\begin{fact}[{\cite[Corollary 2.11]{FavO14}}]\label{fact:realizability-topos}
Every realizability subtopos of the effective topos is equivalent to one of the form ${\sf RT}(K_1[f])$ for some partial function $f$ on the natural numbers.

Moreover, there is an inclusion of toposes ${\sf RT}(K_1[f])\embed{\sf RT}(K_1[g])$ if and only if $g\subT f$.
\end{fact}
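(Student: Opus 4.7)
The plan is to prove the two assertions in turn, with the second decomposed into its two directions. The framework is the correspondence between realizability subtoposes of $\Eff$, local operators on $\Eff$, and applicative morphisms of partial combinatory algebras, as developed in \cite{vOBook}.

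For the classification statement, I would use that every realizability subtopos arises from a pca $A$ together with an applicative morphism $K_1 \to A$, via ${\sf RT}(A)$. The goal is then to replace an arbitrary such $A$ by one of the special form $K_1[f]$. My approach is to analyze how $A$ modifies the computability predicate on $\nabla\om$: roughly, the extra computational power of $A$ beyond $K_1$ should be captured by a single partial function $f\pcolon\om\to\om$ extracted from the stable truth values of $A$ over $\om$. One then shows that the pcas $A$ and $K_1[f]$ generate the same subtopos of $\Eff$.

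For the reducibility correspondence: if $g \subT f$, witnessed by some $\Phi$ with $g\subseteq\Phi[f]$, then every $K_1[g]$-computation can be simulated in $K_1[f]$ by replacing each oracle call to $g$ with the subroutine $\Phi[f]$; this gives a computationally dense applicative morphism $K_1[g] \to K_1[f]$, which standardly induces a geometric inclusion ${\sf RT}(K_1[f]) \embed {\sf RT}(K_1[g])$. Conversely, given such a geometric inclusion, the tripos-theoretic representation of inclusions between realizability toposes yields an applicative morphism $K_1[g] \to K_1[f]$, whose tracker for the identity on $\om$ is essentially a partial computable $\Phi$ with $g \subseteq \Phi[f]$, i.e., $g \subT f$.

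The main obstacle is the classification half. The reducibility correspondence, once the pca-morphism machinery is in place, is essentially a re-packaging of the transitivity argument for $\subT$: one replaces oracle queries with a subroutine, exactly as in the proof that $f\subT g\subT h$ implies $f\subT h$. The hard part is showing that an arbitrary realizability subtopos --- presented abstractly via a local operator on $\Eff$ --- must actually come from a pca of the form $K_1[f]$, rather than some more exotic partial combinatory algebra; concretely producing a partial function $f\pcolon\om\to\om$ from purely topos-theoretic data is the real content of Faber--van Oosten's theorem.
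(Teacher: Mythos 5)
This statement is a cited external result: the paper attributes it to Faber--van Oosten \cite[Corollary 2.11]{FavO14} as a ``Fact'' and gives no proof of its own. Immediately after the statement, the paper only remarks that, granting the first assertion, the second can alternatively be read off from a correspondence between subTuring degrees and a suborder of the Lawvere--Tierney topologies on $\Eff$, with pointers to \cite{Kih21,Kih22}. So there is no internal proof for your sketch to be compared against.

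That said, your outline is a fair reconstruction of the route taken in the cited source, and you correctly locate the genuine content. Some remarks on accuracy. For the backward implication $g\subT f\Rightarrow {\sf RT}(K_1[f])\embed{\sf RT}(K_1[g])$, the cleaner packaging is that $g\subT f$ makes $K_1[f]$ (equivalent to) an oracle extension of $K_1[g]$, i.e.\ $K_1[f]\simeq (K_1[g])[f]$, and the canonical applicative morphism $A\to A[f]$ is always computationally dense and induces a geometric inclusion ${\sf RT}(A[f])\embed{\sf RT}(A)$; your ``replace each oracle call by the subroutine $\Phi[f]$'' is precisely the tracker of this morphism, but the relevant standard lemma is about oracle extensions rather than an arbitrary computationally dense morphism (computational density alone gives a geometric morphism, not an inclusion). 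For the forward implication, you would need to extract the partial function $g$ from the geometric inclusion via the local operator it determines, not merely ``a tracker for the identity on $\om$''; this is exactly the same extraction that drives the classification half. Finally, you are right that the classification half is the hard part: the content of Faber--van Oosten's Corollary 2.11 is that a local operator on $\Eff$ presenting a realizability subtopos is generated by a ``basic'' local operator coming from a single partial function $f\pcolon\om\to\om$, and that the corresponding sheaf subtopos is ${\sf RT}(K_1[f])$; your sketch correctly identifies this but leaves it at the level of intent. As a blind attempt this is a reasonable road map, but it does not constitute a proof, and the paper itself neither needs nor supplies one.
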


Assuming the former assertion, the latter assertion can also be derived from the correspondence between the subTuring degrees and some suborder of the poset of Lawvere-Tierney topologies; see also \cite{Kih21,Kih22}.
The important point is that this is based on subTuring reducibility, and not on nondeterministic Turing reducibility (enumeration reducibility).
In this way, the theory of subTuring degrees can be seen as a classification theory of the realizability subtoposes of the effective topos, or a classification theory of $K_1$-based pcas.

\section{Basic operations on subTuring degrees}\label{sec:basic}

\subsection{Lattice structure}
Let us examine the basic structure of the subTuring degrees.
Most degree structures studied in computability theory are upper semilattice, but they often do not have a meet (consider the Turing degrees, the many-one degrees, the truth-table degrees, the enumeration degrees, and so on).

Degree structures with meet are also known, but they are degree structures for problems with many solutions (e.g., the Medvedev degrees, the Muchnik degrees, and the Weihrauch degrees), and in the context of this article, they correspond to degree structures for multi-valued functions.
Surprisingly, the subTuring degree structure has a meet even though it deals with single-valued functions.

\begin{prop}\label{prop:subTuring-lattice}
The subTuring degrees form a lattice.
\end{prop}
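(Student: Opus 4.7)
The plan is to exhibit an explicit join operation and an explicit meet operation on partial functions, each respecting subTuring equivalence.

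For the join, I would take the usual interleaving $f\oplus g$ defined by $(f\oplus g)(2n)=f(n)$ and $(f\oplus g)(2n+1)=g(n)$, with the corresponding domain. The two reductions $f,g\subT f\oplus g$ amount to prefixing the input with $0$ or $1$; conversely, if $f\subseteq\Phi[h]$ and $g\subseteq\Psi[h]$ then splitting on the parity of the input produces a single sequential computation showing $f\oplus g\subT h$. This is all routine and uses only the monotonicity of $\Phi[\,\cdot\,]$ already recorded in the definition.

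The interesting part is the meet. Here I would exploit the partiality of subTuring reducibility: set
\[
(f\wedge g)(\langle e,e',n\rangle)\;=\;m
\quad\Longleftrightarrow\quad
\Phi_e[f](n)\!\downarrow=m=\Phi_{e'}[g](n)\!\downarrow,
\]
and $(f\wedge g)(\langle e,e',n\rangle)\uparrow$ otherwise. To see $f\wedge g\subT f$, the algorithm on input $\langle e,e',n\rangle$ simply runs $\Phi_e[f](n)$ and outputs the result; on the domain of $f\wedge g$ this agrees with the value of $f\wedge g$, which is all that $\subT$ demands (outside that domain the procedure may err or diverge, but this is harmless). Symmetrically $f\wedge g\subT g$.

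For the greatest-lower-bound property, suppose $k\subT f$ and $k\subT g$, witnessed respectively by $\Phi_e$ and $\Phi_{e'}$. Then for every $n\in\dom(k)$ we have $\Phi_e[f](n)\downarrow=k(n)=\Phi_{e'}[g](n)\downarrow$, so $\langle e,e',n\rangle\in\dom(f\wedge g)$ with value $k(n)$. Hence the computation $n\mapsto(f\wedge g)(\langle e,e',n\rangle)$ queries only points inside $\dom(f\wedge g)$ and returns $k(n)$, witnessing $k\subT f\wedge g$. Since the operations clearly depend only on the subTuring degrees of $f,g$, this completes the lattice structure.

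I do not expect a serious obstacle; the one conceptual point to emphasise—and the reason this meet exists despite its non-existence for Turing degrees of total functions—is that $\subT$ insists on agreement only on the domain of the function being reduced, so $f\wedge g$ is permitted to be undefined precisely where the candidate reductions disagree or fail to converge.
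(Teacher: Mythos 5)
Your proposal is correct and matches the paper's proof essentially verbatim: both use the standard interleaving for the join and define the meet by $(f\cap g)(d,e,n)=\Phi_d[f](n)$ precisely when $\Phi_d[f](n)\downarrow=\Phi_e[g](n)\downarrow$, with the same verification that this gives a greatest lower bound. The remark that partiality is what makes the meet possible is also the reading the paper emphasizes.
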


\begin{proof}
The join $f\oplus g$ is defined as usual:
\[
(f\oplus g)(2n)=f(n);\qquad (f\oplus g)(2n+1)=g(n).
\]

The meet $f\cap g$ is defined as follows:
\[
(f\cap g)(d,e,n)=
\begin{cases}
\Phi_d[f](n)&\mbox{ if }{\Phi_d[f](n)\downarrow}={\Phi_e[g](n)\downarrow}\\
\uparrow&\mbox{ otherwise}.
\end{cases}
\]

For $f\cap g\subT f$, to compute $(f\cap g)(d,e,n)$, simulate $\Phi_d[f](n)$.
If $(d,e,n)\in{\rm dom}(f\cap g)$ then $\Phi_d[f](n)\downarrow$ by definition, so any queries made during the computation are contained within the domain of $f$.
The same applies to $f\cap g\subT g$.

To show that $f\cap g$ is the greatest lower bound of $f$ and $g$, assume $h\subT f,g$.
Then there exist $d$ and $e$ such that $h\subseteq\Phi_d[f],\Phi_e[g]$.
This means that $n\in{\rm dom}(h)$ implies ${\Phi_d[f](n)\downarrow}={\Phi_e[g](n)\downarrow}=h(n)$, so we have $(d,e,n)\in{\rm dom}(f\cap g)$ and $(f\cap g)(d,e,n)=h(n)$.
This shows $h\subT f\cap g$.
\end{proof}

Thus, the subTuring degrees have the property of being a lattice, which is very rare in degree theory.
A few other degree structures that are lattices, such as Medvedev and Weihrauch degrees, are known to be distributive lattices.
Is the subTuring lattice also distributive?
Quite interestingly, the subTuring lattice is non-distributive!
The result that it is a lattice, and that it is not distributive despite being a lattice, implies that it is not elementarily equivalent to other known degree structures.
The proof is a bit technical, so it is better to get used to the basic proof techniques first, so we leave the proof for later (Theorem \ref{thm:subTuring-nondistributive}).

\subsection{Completeness}

Next, let us discuss countable joins and meets.
There is no nontrivial countable supremum in several degree structures, including the Turing degrees.
Spector's proof uses the notion of an exact pair, which leads to the absence of a meet, but is not applicable to the subTuring degrees by Proposition \ref{prop:subTuring-lattice}.
Nevertheless, it is easy to prove the absence of nontrivial countable supremum in the subTuring degrees using a similar method.

\begin{prop}\label{prop:no-strictly-increasing}
No strictly increasing $\om$-sequence of subTuring degrees has a supremum.
\end{prop}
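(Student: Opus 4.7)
Suppose for contradiction that the subTuring supremum $h$ of a strictly increasing sequence $(f_n)_{n \in \om}$ exists. Preliminary: $h\not\subT f_n$ for every $n$, since otherwise $f_{n+1}\subT h\subT f_n$ would contradict strict increase. The plan is to construct an upper bound $g$ of $(f_n)$ with $h\not\subT g$, contradicting $h$ being the least upper bound.

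The function $g$ will be built as a \emph{non-uniform column encoding}: we choose distinct column indices $c_0,c_1,\dots\in\om$ and set $g(c_n,x)=f_n(x)$ for $x\in\dom(f_n)$, leaving $g$ undefined elsewhere. For each fixed $n$, the constant $c_n$ can be hard-coded into a partial computable functional that, on input $x$, queries $g(c_n,x)$, yielding $f_n\subT g$ (necessarily non-uniformly in $n$). The $c_n$ are selected by a forcing construction whose conditions are pairs $(\pi,F)$: $\pi\pcolon\om\to\om$ is a finite partial injection recording the columns assigned so far, and $F\subseteq\om$ is a finite set of forbidden columns disjoint from the range of $\pi$. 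The positive requirement "$n\in\dom(\pi)$" is trivially dense, since at any stage only finitely many columns are committed or forbidden.

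The main step is the diagonalization requirement $N_e\colon\Phi_e[g]\not\supseteq h$. Given $(\pi,F)$, let $g_\pi$ denote the partial function with $g_\pi(\pi(n),x)=f_n(x)$ for $n\in\dom(\pi)$ and $x\in\dom(f_n)$, undefined elsewhere. By hard-coding the finite table for $\pi$ and using the reductions $f_n\subT f_N$ (from transitivity of $\subT$, where $N=\max\dom(\pi)$), one obtains $g_\pi\subT f_N$. Hence if $\Phi_e[g_\pi]\supseteq h$, then composing reductions gives $h\subT g_\pi\subT f_N$, contradicting the preliminary observation. Thus some $m\in\dom(h)$ satisfies $\Phi_e[g_\pi](m)\neq h(m)$. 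If $\Phi_e[g_\pi](m)\downarrow$ with wrong value, monotonicity ensures that any extension $g\supseteq g_\pi$ retains this wrong value, satisfying $N_e$. If $\Phi_e[g_\pi](m)\uparrow$, the computation is stuck at some query $(k,y)\notin\dom(g_\pi)$; adjoining $k$ to $F$ forbids $k$ from ever appearing as any future $c_n$, so $(k,y)$ remains outside $\dom(g)$ and the divergence persists in the final $g$.

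Meeting all positive and diagonalization requirements by standard forcing then yields a $g$ that is an upper bound of $(f_n)$ and satisfies $h\not\subT g$, contradicting $h$ being the supremum. The main technical point is the reduction $g_\pi\subT f_N$, which collapses the finitely many committed columns into a single $f_N$-oracle; this is where strict increase of $(f_n)$ is used essentially, as it supplies $h\not\subT f_N$ and allows the diagonalization to go through.
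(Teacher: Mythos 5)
Your proposal is correct and follows essentially the same strategy as the paper's proof: encode the sequence column-by-column into an upper bound $g$, then diagonalize against each $\Phi_e$ by either observing that $\Phi_e[g_\pi]$ cannot reproduce $h$ (since $g_\pi\subT f_N$ and $h\not\subT f_N$), or freezing a query outside the committed columns. The paper phrases this as a direct stage-by-stage construction, while you frame it as a forcing argument with conditions $(\pi,F)$; this is a cosmetic difference.

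One small imprecision worth fixing: in the case $\Phi_e[g_\pi](m)\uparrow$, you assert that "the computation is stuck at some query $(k,y)\notin\dom(g_\pi)$," but a sequential computation can diverge without a stuck query --- the functional $\Phi_e$ may internally loop or make an infinite sequence of queries all answered by $g_\pi$. In those cases no $k$ needs to be added to $F$, and monotonicity of the dialogue model already guarantees that the divergence is preserved by any extension $g\supseteq g_\pi$. Likewise, if the stuck query $(k,y)$ has $k$ already in $\operatorname{ran}(\pi)$ (so that $y\notin\dom(f_n)$ for the associated $n$), then $(k,y)$ is permanently outside $\dom(g)$ without any action, and in fact $k$ cannot be added to $F$ since you require $F$ disjoint from $\operatorname{ran}(\pi)$. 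Neither issue affects the correctness of the argument, but the case split should be stated fully.
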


\begin{proof}
Assume that we are given a countable sequence $(g_n)_{n\in\om}$ of partial functions of increasing subTuring degrees.
Let us check that given any upper bound $h$ of $(g_n)_{n\in\om}$, we can construct another upper bound $f$ such that $h\not\subT f$.
For this purpose, we construct a strictly increasing sequence $(a_n)_{n\in\om}$ and guarantee that $f(a_n,x)=g_n(x)$.
Also, let $f(b,x)$ be undefined if $b\not=a_n$ for any $n\in\om$.
In this case, it is clear that $g_n\subT f$ for any $n\in\om$, so we want to guarantee $h\not\subT f$ by choosing $a_n$ appropriately.

We describe a strategy to ensure $h\not\subT f$ via $e$, i.e., $h\not\subseteq\Phi_e[f]$.
Assume that $(a_n)_{n\leq e}$ has already been constructed  and that $f(b,x)$ is determined for any $b\leq a_e$.
At stage $e$, perform the following actions:
Ask if the computation of $\Phi_e[f](n)$ for some input $n\in{\rm dom}(h)$ makes a query $(b,x)$ for some $b>a_e$ and $x\in\om$.

\medskip
\noindent
{\it Case (1):}
If yes, declare $f(b,x)\uparrow$.
In other words, put $a_{e+1}=b+1$ and proceed to the next stage $e+1$.
In this case, the computation of $\Phi_e[f](n)$ for some $n\in{\rm dom}(h)$ makes a query to $(b,x)\not\in{\rm dom}(f)$, which implies $\Phi_e[f](n)\uparrow$.
In particular, we have $h\not\subseteq\Phi_e[f]$.

\medskip
\noindent
{\it Case (2):}
If no, then do nothing and proceed to the next stage $e+1$.
In this case, only queries whose first coordinates are at most $a_e$ are asked to $f$ during the computation of $\Phi_e[f](n)$ for any $n\in{\rm dom}(h)$.
By our definition of $f$, using $g_0\oplus\dots \oplus g_e$ one can easily compute $f(b,x)$ for any $b\leq a_e$ and $x\in\om$.
In other words, $f\upto (a_e+1)\times\om\subT g_0\oplus\dots\oplus g_e\subT g_e$.
By assumption, we have $h\not\subT g_e$, so $h\not\subT f\upto (a_e+1)\times\om$.
However, the computation of $\Phi_e[f]$ is performed by accessing only to $f\upto (a_e+1)\times\om$, so computing $h$ must have failed somewhere.
Hence, we get $h\not\subseteq\Phi_e[f]$.
\end{proof}

By a somewhat dual argument, the absence of nontrivial countable infimum can also be easily proved.

\begin{prop}\label{prop:no-strictly-decreasing}
No strictly decreasing $\om$-sequence of subTuring degrees has an infimum.
\end{prop}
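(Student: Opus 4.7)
The plan is to dualize Proposition \ref{prop:no-strictly-increasing}. Given a strictly decreasing sequence $(g_n)_{n\in\om}$ and any candidate infimum $h$ (so $h\subT g_n$ for every $n$), I will construct another lower bound $f$ with $f\not\subT h$, contradicting the maximality of $h$ among lower bounds. A preliminary observation is that strict decreasingness forces $g_n\not\subT h$ for every $n$: otherwise $g_n\subT h\subT g_{n+1}$ would collapse the strict gap.

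The construction places $g_n$ into $f$ row-by-row, but with finitely many values per row, where the per-row bound is chosen to defeat one diagonalization requirement. Concretely, I set $f(\langle n,x\rangle)=g_n(x)$ for $x<k_n$ and leave $f$ undefined elsewhere; the bounds $k_n$ are chosen in stages. At stage $e$, the map $\lambda x.\Phi_e[h](\langle e,x\rangle)$ is a candidate subTuring reduction of $g_e$ to $h$; since $g_e\not\subT h$, it fails to extend $g_e$ somewhere, so I can pick $x_e$ with $g_e(x_e)\downarrow$ while $\Phi_e[h](\langle e,x_e\rangle)\neq g_e(x_e)$ (allowing the latter to diverge). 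Setting $k_e>x_e$ puts $f(\langle e,x_e\rangle)=g_e(x_e)$ into $\dom(f)$ and witnesses $f\not\subseteq\Phi_e[h]$; ranging over $e$ gives $f\not\subT h$.

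The hard part will be verifying $f\subT g_m$ for each fixed $m$. I would split $\dom(f)$ into rows with $n<m$ and rows with $n\ge m$. The first part is finite (finitely many rows, at most $k_n$ values each), and I hardcode these specific values into the partial computable reduction $\Phi^{(m)}$. For $n\ge m$ I need to compute $g_n(x)$ from $g_m$ uniformly in $n$, by composing the one-step reductions $g_{n+1}\subseteq\Phi_{s_n}[g_n]$, and the main obstacle is that the indices $(s_n)$ need not be computable in $n$ for an arbitrary strictly decreasing sequence. I would resolve this by a WLOG move: replace each $g_n$ by an equi-degree $\tilde g_n$ that additionally stores a reduction index $\tilde s_n$ at one reserved input $z^*$, since a single-point modification preserves the subTuring degree (the extra value is hardcodable), and a recursion-theoretic fixed-point argument produces a simultaneous consistent choice of such encodings. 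Then $\Phi^{(m)}[\tilde g_m]$ can chain-read $\tilde s_m,\tilde s_{m+1},\dots$ by iteratively querying successively computed tiers at $z^*$, and so uniformly recover $\tilde g_n(x)$ for all $n\ge m$, completing the reduction.
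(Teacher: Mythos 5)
Your skeleton is the right dualization and the diagonalization part is sound: placing a single witness $g_e(x_e)$ with $g_e(x_e)\neq\Phi_e[h](\langle e,x_e\rangle)$ into row $e$ of $f$ does kill $\Phi_e$ as a candidate reduction of $f$ to $h$. The genuine gap is exactly where you flag it: verifying $f\subT g_m$. Your proposed WLOG repair does not work. You want each $\tilde g_n$ to store, at a reserved point $z^*$, an index $\tilde s_n$ with $\tilde g_{n+1}\subseteq\Phi_{\tilde s_n}[\tilde g_n]$. But then the program $\Phi_{\tilde s_n}$ must, on input $z^*$ with oracle $\tilde g_n$, output $\tilde s_{n+1}$. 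It cannot compute $\tilde s_{n+1}$ from $\tilde g_n$ (the only stored index there is $\tilde s_n$, and $g_n$ itself carries no such information), so it would have to hardcode $\tilde s_{n+1}$, which in turn hardcodes $\tilde s_{n+2}$, and so on ad infinitum --- an infinite amount of information that no single integer index can contain. The Kleene recursion theorem resolves finite self-referential loops, not an infinite forward chain of distinct indices whose underlying reductions $s_n$ (with $g_{n+1}\subseteq\Phi_{s_n}[g_n]$) are an arbitrary, possibly non-computable sequence handed to you with $(g_n)$.

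The paper sidesteps this entirely by a change of what gets coded: instead of placing values of $g_n$ into $f$, it places values of the cumulative meet $g_n^*=g_0\cap\cdots\cap g_n$, whose inputs have the form $(e_0,\dots,e_n,x)$ and whose output is the common value of $\Phi_{e_0}[g_0](x),\dots,\Phi_{e_n}[g_n](x)$. So if $f(a_m)\downarrow$ with $a_m=\langle e_0,\dots,e_m,x\rangle\in\dom(g_m^*)$, then for every $n\le m$ one has $f(a_m)=\Phi_{e_n}[g_n](x)$, and the index $e_n$ is read off directly from the input $a_m$ --- no knowledge of any reduction between the $g_i$'s is needed. This makes $f\subT g_n$ automatic for each $n$ (up to finitely many hardcoded values from rows below $n$), while the diagonalization against $h$ goes through just as in your proposal because $g_n^*\eqsubT g_n\not\subT h$. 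If you replace "row $n$ holds $g_n$" by "the $n$th coding location holds a value of $g_n^*$", your argument closes.
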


\begin{proof}
Assume that we are given a countable sequence $(g_n)_{n\in\om}$ of partial functions of decreasing subTuring degrees.
Let us check that given any lower bound $h$ of $(g_n)_{n\in\om}$, we can construct another lower bound $f\not\subT h$.
Now, instead of $g_n$, consider $g_n^\ast=g_0\cap\dots\cap g_n$.
For simplicity, we can assume that an input of $g_n^\ast$ is of the form $(e_0,\dots,e_n,x)$.
That is, $g_n^\ast(e_0,\dots,e_n,x)$ is the common value of $\Phi_{e_0}[g_0](x),\dots,\Phi_{e_n}[g_n](x)$ if it is defined.
In particular, we have ${\rm dom}(g_n^\ast)\subseteq \om^{n+2}$, so we can assume that the domains of $g_n^\ast$'s are pairwise disjoint.
We construct a strictly increasing sequence $(a_n)_{n\in\om}$ such that $a_n\in{\rm dom}(g_n^\ast)$ and define $f(a_n)=g_n^\ast(a_n)$.
Also, let $f(b)$ be undefined if $b\not=a_n$ for any $n\in\om$.

First, let us check $f\subT g_n^\ast$ no matter how we choose $(a_n)_{n\in\om}$.
For each $m\geq n$, if $a_m$ is of the form $\langle e_0,\dots,e_m,x\rangle$ and $f(a_m)\downarrow$, then the value of $f(a_m)$ is the common value of $\Phi_{e_0}[g_0](x),\dots,\Phi_{e_m}[g_m](x)$, which is especially the common value of $\Phi_{e_0}[g_0](x),\dots,\Phi_{e_n}[g_n](x)$.
Therefore, it follows that $f(a_m)=g_m^\ast(e_0,\dots,e_m,x)=g_n^\ast(e_0,\dots,e_n,x)$.
In this way, using $g_n^\ast$ one can compute $f$ except for finitely many values, so we obtain $f\subT g_n^\ast$.
Now we want to guarantee $f\not\subT h$ by choosing $a_n$ appropriately.

We describe a strategy to ensure $f\not\subT h$ via $e$, i.e., $f\not\subseteq\Phi_e[h]$.
Assume that $(a_n)_{n<e}$ has already been constructed.
At stage $e$, perform the following actions:
Find $a_{e}\in{\rm dom}(g_e)$ such that $a_{e}>a_{e-1}$ and $\Phi_e[h](a_{e})\not=g_e^\ast(a_{e})$.
Since $g_e^\ast\not\subT h$ by assumption, such $a_{e}$ must exist.
Then proceed to the next stage $e+1$.
Note that $a_{e}\in{\rm dom}(g_e)$ implies $a_{e}\in {\rm dom}(f)$ by definition.
Therefore, our strategy ensures $\Phi_e[h](a_e)\not=f(a_e)$, which implies $f\not\subseteq \Phi_e[h]$.
\end{proof}

\subsection{Jump Operator}\label{sec:jump-operator}

For Turing degrees, there is an important operator called the Turing jump, which is defined as the relative halting problem.
There is also the enumeration jump operator for the enumeration degrees, which is an extension of the Turing jump operator.
It is hoped, then, that the notion of jump for the subTuring degrees can also be introduced.
It is natural to require, for example, the following for the subTuring jump operator $J$.
\begin{enumerate}
\item Inflationary: $f\subT J(f)$
\item Monotone: $f\subT g\implies J(f)\subT J(g)$
\item Conservative: $f$ total $\implies$ $J(f)\equiv_{\rm subT} f'$
\item Strict: $J(f)\not\subT f$
\end{enumerate}

Here, $f'$ denotes the Turing jump of $f$.
However, currently, no operator $J$ has been found that satisfies all of these properties.
For example, the most obvious candidate for $J$ is probably the relative halting problem, as usual.
\[
K_0(f)(e)=
\begin{cases}
1&\mbox{ if }\Phi_e[f](e)\downarrow\\
0&\mbox{ if }\Phi_e[f](e)\uparrow
\end{cases}
\]

However, this is not appropriate for a jump operator in the following sense:

\begin{obs}
$K_0$ is not monotone.
\end{obs}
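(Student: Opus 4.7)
The strategy is to exhibit explicit partial functions $f$ and $g$ with $f \subT g$ yet $K_0(f) \not\subT K_0(g)$. The guiding insight is that restricting a total function to a domain $D$ of high Turing degree produces a partial function whose relative halting problem directly encodes membership in $D$, whereas the total function's own relative halting problem is just its ordinary Turing jump. Hence restriction can push $K_0$ \emph{up} in Turing degree, violating monotonicity.

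Concretely, I take $g$ to be the constant zero total function, so that a $g$-relative Sasso computation is an ordinary computation with a trivially computable oracle; thus $K_0(g) \equiv_T \emptyset'$. Fix any set $D \subseteq \om$ with $D \not\leq_T \emptyset'$---for instance $D = \emptyset''$---and let $f$ be defined by $f(n) = 0$ for $n \in D$ and $f(n)\uparrow$ otherwise. Since $f \subseteq g$, the single-query Sasso-functional given by $\Phi[g](n) = g(n)$ witnesses $f \subT g$ immediately.

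The remaining work is to show $D \leq_1 K_0(f)$ by a uniform coding. By the $s$-$m$-$n$ theorem, produce for each $n$ a computable index $\tilde n$ of the sequential machine that, on input $\tilde n$, issues a single oracle query to the location $n$ and halts as soon as the response arrives. Under the oracle $f$ this computation terminates iff the query succeeds iff $n \in \dom(f) = D$, so $K_0(f)(\tilde n) = 1$ precisely when $n \in D$. Combining $K_0(f) \geq_T D \not\leq_T \emptyset' \equiv_T K_0(g)$ with the earlier observation that subT-reducibility between total functions coincides with Turing reducibility yields $K_0(f) \not\subT K_0(g)$. The only point requiring a little care---and it is not a real obstacle---is formalising the single-query machine within the $\langle i_s, q_s\rangle$ protocol, by setting $\langle i_0, q_0\rangle = \langle 0, n\rangle$ and then $i_1 = 1$.
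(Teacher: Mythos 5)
Your argument is correct and is essentially the paper's proof, just concretized: the key move in both is that the one-query functional yields $\chi_{\dom(f)}\subT K_0(f)$, while restricting a total function $g$ to a set $D$ gives $g\upto D\subT g$, so choosing $D$ strictly harder than $K_0(g)$ contradicts monotonicity. The paper states this with arbitrary total $f$ and an abstract $A$ with $\chi_A\not\subT K_0(f)$; you instantiate $g$ as the zero function and $D=\emptyset''$, which is fine and perhaps slightly more self-contained.
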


\begin{proof}
Observe $\chi_{{\rm dom}(f)}\subT K_0(f)$.
This is because if $\Phi_{d(n)}$ is a computation that asks the query $n$ and halts regardless of the solution, then being $n\in{\rm dom}(f)$ and $\Phi_{d(n)}^f(d(n))$ halting are equivalent, which is equivalent to $d(n)\in K_0(f)(d(n))$.

If $f$ is a total function, then $f\upto A\subT f$ for any $A$.
Let us take $A$ such that $\chi_A\not\subT K_0(f)$.
If $K_0$ is monotone, then we get $\chi_A\subT K_0(f\upto A)\subT K_0(f)$, which is a contradiction.
\end{proof}

However, we cannot give up the monotonicity necessary to guarantee that the jump operator is well-defined on the subTuring degrees.
Considering what went wrong, we just defined $K_0(f)$ as the relative halting problem, but we must not forget that in our relative computation, there are two kinds of reasons for ``not halting''.
That is, one is simply that a machine (a Turing functional $\Phi$) does not halt, and the other is that the query was made outside the domain of an oracle and we are waiting forever for the oracle's response.
Thus, it is reasonable to define the relative halting problem as follows:
\[
K(f)(e)=
\begin{cases}
1&\mbox{ if }\Phi_e[f](e)\downarrow\\
0&\mbox{ if }\Phi_e[f](e)\uparrow\mbox{ without making a query outside of ${\rm dom}(f)$,}\\
\uparrow&\mbox{ if }\Phi_e[f](e)\uparrow\mbox{ because of making a query outside of ${\rm dom}(f)$.}
\end{cases}
\]

This is similar to the way in which a feedback Turing machine \cite{AFL20} distinguishes between non-halting and freezing computations, but we have not yet figured out the formal correspondence.
This relative halting problem $K$ has the expected properties.
\begin{prop}
$K$ satisfies the following conditions:
\begin{enumerate}
\item Uniformly inflationary: $f\subT K(f)$ via a single index.
\item Uniformly monotone: $f\subT g\implies K(f)\subT K(g)$ uniformly.
\item Uniformly conservative: $f$ total $\implies$ $K(f)\equiv_{\rm subT} f'$ via a single index.
\end{enumerate}
\end{prop}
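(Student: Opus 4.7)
The plan is to establish each of the three clauses in turn, exploiting the fact that the two reasons for non-halting in the sequential model are precisely what the middle clause of $K(f)$ records.

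For \textbf{uniform inflation}, I will use the $s$-$m$-$n$ theorem to produce, computably in $(n,m)$, an index $P_{n,m}$ such that $\Phi_{P_{n,m}}$ on any input first queries $n$, then halts if and only if the answer is $m$ (and otherwise enters a quiet infinite loop that makes no further queries). Then for $n\in\dom(f)$ the value $K(f)(P_{n,m})$ is defined for every $m$: it equals $1$ when $m=f(n)$ and $0$ otherwise, since the only query made is to $n\in\dom(f)$. Therefore, to compute $f(n)$ from $K(f)$, one queries $K(f)$ at $P_{n,0},P_{n,1},\dots$ in sequence and outputs the first $m$ with answer $1$. All these queries lie in $\dom(K(f))$ whenever $n\in\dom(f)$, so this is a legitimate subTuring reduction with a single index.

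For \textbf{uniform monotonicity}, assume $f\subseteq\Phi_d[g]$. Using $s$-$m$-$n$, I will define a computable function $s_d$ such that $\Phi_{s_d(e)}$ simulates $\Phi_e$ but every query $q$ that $\Phi_e$ would pose to its oracle is replaced by the subcomputation $\Phi_d[g](q)$ (whose queries to $g$ are then posed sequentially in place of the single query $q$). The key verification, which I expect to be the main obstacle and needs to be done carefully, is: for $e\in\dom(K(f))$, the query $s_d(e)$ lies in $\dom(K(g))$ and $K(g)(s_d(e))=K(f)(e)$. If $K(f)(e)=1$ then $\Phi_e[f](e)$ halts, so only finitely many queries $q$ to $f$ are made and all lie in $\dom(f)$; each is faithfully resolved by $\Phi_d[g](q)=f(q)$ using queries inside $\dom(g)$, so the simulation halts and $K(g)(s_d(e))=1$. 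If $K(f)(e)=0$, every query made by $\Phi_e[f](e)$ is inside $\dom(f)$ but the computation never halts; each such query is again resolved by $\Phi_d[g]$ without leaving $\dom(g)$, so the simulation runs forever without an outside query, giving $K(g)(s_d(e))=0$. The index $s_d$ is obtained uniformly from $d$, so the reduction $K(f)\subT K(g)$ is uniform in the index witnessing $f\subT g$.

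For \textbf{uniform conservativity}, suppose $f$ is total. Then the third clause in the definition of $K(f)$ is vacuous, so $K(f)$ is a total $\{0,1\}$-valued function. What remains is a standard translation between sequential and parallel oracle access when the oracle is total: given a sequential index $e$, one computably produces a classical Turing index $\hat e$ such that $\Phi_e[f]\simeq\Phi_{\hat e}^f$ for every total $f$, and conversely (simulate parallel queries by posing them sequentially in a canonical order). Hence $K(f)(e)=f'(\hat e)$ and $f'(k)=K(f)(\check k)$ via computable maps $e\mapsto\hat e$, $k\mapsto\check k$, giving $K(f)\equiv_{\rm subT} f'$ via a single pair of indices, independent of the total $f$.
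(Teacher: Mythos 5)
Your proof is correct and follows essentially the same route as the paper: for inflation, build a family of indices that query $n$ and halt conditionally (the paper phrases this for the $0$--$1$-valued graph $G_f$, which it has already shown is subTuring-equivalent to $f$, while you encode the value test $m=f(n)$ directly); for monotonicity, substitute each query via $\Phi_d[g]$ so that $\Phi_{s_d(e)}$ simulates $\Phi_e[\Phi_d[g]]$ and then check the two cases $K(f)(e)\in\{0,1\}$; for conservativity, use that the sequential and parallel oracle models agree on total oracles. Your write-up is somewhat more explicit than the paper's in clauses (1) and (3), but the underlying argument is the same.
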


\begin{proof}
(1) One can easily find an index $i(n)$ such that $\Phi_{i(n)}[f](x)$ simulates $f(n)$ no matter what $f$ and $x$ are.
Then we have $f(n)=K(f)(i(n))$; hence $f\subT K(f)$ via an index of $\lambda n.i(n)$.

(2) Assume $f\subT g$ via $d$; that is, $f\subseteq\Phi_d[g]$.
If $e\in{\rm dom}(K(f))$, $Phi_e[f](e)$ makes queries only within the domain of $f$.
Therefore, the replies to these queries by $f$ and $\Phi_d[g]$ are identical.
That is, $\Phi_e[f](e)$ and $\Phi_e[\Phi_d[g]](e)$ perform the same computation, especially whether they halt or not.
One can effectively find an index $b(d,e)$ such that $\Phi_{b(d,e)}[g](x)$ simulates $\Phi_e[\Phi_d[g]](e)$ no matter what $g$ and $x$ are.
Then we get $K(f)(e)=K(g)(b(d,e))$.
Consequently, $f\subT g$ via $d$ implies $K(f)\subT K(g)$ via an index of $\lambda e.b(d,e)$.

(3) If $f$ is total then so is $K(f)$.
Thus, $K(f)=K_0(f)=f'$.
\end{proof}

From these properties, we can automatically derive the strictness of $K$ on hyperarithmetical functions.
We say that a partial function $f$ is hyperarithmetical if it extends to a total $\Delta^1_1$ function.

\begin{prop}
If $J$ is uniformly inflationary, uniformly monotone, and uniformly conservative, then $f<_{\rm subT}J(f)$ for any partial hyperarithmetical function $f$.
Indeed, $f\equiv_{\rm subT}J(f)$ implies $\emptyset^{(\alpha)}<_{\rm subT}f$ for any computable ordinal $\alpha$.
\end{prop}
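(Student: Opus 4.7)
The plan is to derive the first assertion from the stronger second one, and to prove the second by effective transfinite recursion along Kleene's $\mathcal{O}$. For the first: any partial hyperarithmetical $f$ extends to a total $\Delta^1_1$ function $\hat f$, and by the classical hierarchy theorem there is a computable ordinal $\alpha$ with $\hat f\leq_T\emptyset^{(\alpha)}$; hence $f\subT\hat f\subT\emptyset^{(\alpha)}$. Were $f\equiv_{\rm subT}J(f)$, the second assertion would give $\emptyset^{(\alpha)}<_{\rm subT}f$, contradicting $f\subT\emptyset^{(\alpha)}$. Since $f\subT J(f)$ always holds by uniform inflation, this forces $f<_{\rm subT}J(f)$.

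To prove the second assertion, I suppose $f\equiv_{\rm subT}J(f)$ and fix an index $d^*$ witnessing $J(f)\subT f$. Let $\mu$ be the computable function provided by uniform monotonicity, and let $c$ be a fixed index realizing $g'\subT J(g)$ for every total $g$, furnished by uniform conservativity. Using the recursion theorem I construct a total computable $a\mapsto e_a$ satisfying $\emptyset^{(|a|)}\subT f$ via $e_a$ for every $a\in\mathcal{O}$. The base case $a=1$ is trivial. For a successor notation $a=2^b$, feed $e_b$ into $\mu$ to obtain an index for $J(\emptyset^{(|b|)})\subT J(f)$, compose with $d^*$ to reach $J(\emptyset^{(|b|)})\subT f$, and finally precompose with $c$ (applicable because $\emptyset^{(|b|)}$ is total) to obtain $\emptyset^{(|b|+1)}\subT f$; this produces $e_a$ effectively from $b$. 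For a limit notation $a=3\cdot 5^b$ with fundamental sequence $(\Phi_b(n))_{n\in\om}$, the recursion already supplies $e_{\Phi_b(n)}$ uniformly in $n$, and then $\emptyset^{(|a|)}=\bigoplus_n\emptyset^{(|\Phi_b(n)|)}$ reduces to $f$ by simulating $\Phi_{e_{\Phi_b(n)}}[f]$ on the $n$th coordinate.

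Strictness $\emptyset^{(\alpha)}<_{\rm subT}f$ follows by applying the same machinery one notch higher: if we had $f\equiv_{\rm subT}\emptyset^{(\alpha)}$, then monotonicity and conservativity would yield $\emptyset^{(\alpha+1)}\equiv_{\rm subT}J(\emptyset^{(\alpha)})\subT J(f)\equiv_{\rm subT}f\equiv_{\rm subT}\emptyset^{(\alpha)}$, and since the extremes are both total this contradicts the strictness of the Turing jump. The main obstacle I expect is book-keeping uniformity cleanly through the limit stages, so that the indices $e_a$ really depend computably on $a\in\mathcal{O}$ and can be assembled into a single reduction at each limit; this is precisely why all three hypotheses on $J$ are required in their uniform form rather than merely pointwise.
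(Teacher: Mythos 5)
Your proposal is correct and follows essentially the same route as the paper: effective transfinite induction along Kleene's $\mathcal{O}$, using uniform conservativity at successor notations, uniform monotonicity together with the fixed index for $J(f)\subT f$ to stay below $f$, and the uniformity of the indices to handle the limit stage via the effective join. You supply a few details the paper leaves implicit (the derivation of the first assertion from the second via the classical bound $\hat f\leq_T\emptyset^{(\alpha)}$, and the explicit strictness step), but these are exactly what the paper's ``by effective transfinite induction'' and ``indeed'' are gesturing at, so this is the same proof.
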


\begin{proof}
Assume $J(f)\subT f$.
Let $\mathcal{O}$ be Kleene's system of ordinal notations \cite{ChYu15}, and let $a\in\mathcal{O}$.
We use $\emptyset^{(a)}$ to denote the iterated Turing jump of $\emptyset$ along the notation $a$.
If $a=2^b$ and $\emptyset^{(b)}\subT f$ via $p_b$ then by monotonicity and conservativity, we have $\emptyset^{(a)}\equiv_{\rm subT}J(\emptyset^{(b)})\subT J(f)\subT f$ via some index $p_a$.
By uniformity, $p_b\mapsto p_a$ is computable.
If $a=3\cdot 5^e$ and $\emptyset^{(\varphi_e(n))}\subT f$ via $p_{\varphi_e(n)}$ for any $n\in\om$, then whenever the sequence $(p_{\varphi_e(n)})_{n\in\om}$ is computable we have $\emptyset^{(a)}=\bigoplus_{n\in\om}\emptyset^{(\varphi_e(n))}\subT f$ via some index $p_a$, where $(p_{\varphi_e(n)})_{n\in\om}\mapsto p_a$ is computable.
Hence, by effective transfinite induction, we get $\emptyset^{(a)}\subT f$ for any $a\in\mathcal{O}$.
\end{proof}

Unfortunately, $K$ does not satisfy strictness in general.
In other words, $K$ has a fixed point. 

\begin{theorem}\label{thm:jump-fixed-point}
There exists a partial function $f$ such that $K(f)\equiv_{\rm subT}f$.
\end{theorem}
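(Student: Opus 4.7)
The plan is to construct $f$ as a fixed point of a suitable monotone operator on partial functions. Define an operator $\Gamma$ on partial functions $g\pcolon\om\to\om$ by
\[
\Gamma(g)(x)=\begin{cases} K(g)(e) & \text{if } x=\langle e,0\rangle \text{ and } K(g)(e)\downarrow, \\ \uparrow & \text{otherwise.} \end{cases}
\]
Any fixed point $f=\Gamma(f)$ immediately yields $K(f)\subT f$ via the fixed partial computable functional which, on input $e$, queries $\langle e,0\rangle$ and returns the oracle's response: the equation $f=\Gamma(f)$ forces $\langle e,0\rangle\in{\rm dom}(f)$ precisely when $e\in{\rm dom}(K(f))$, with $f(\langle e,0\rangle)=K(f)(e)$ there. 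Combined with the uniform inflationarity $f\subT K(f)$ from the preceding proposition, this will give $K(f)\equiv_{\rm subT}f$.

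The main lemma is that $\Gamma$ is monotone under inclusion, which reduces to monotonicity of $K$ itself. Suppose $g\subseteq h$. If $K(g)(e)=1$, i.e.\ $\Phi_e[g](e)\downarrow$, then by the monotonicity of sequential computation noted in Section~2 we have $\Phi_e[h](e)\downarrow$ with the same value, so $K(h)(e)=1$. If instead $K(g)(e)=0$, then $\Phi_e[g](e)\uparrow$ and every query it issues lies in ${\rm dom}(g)\subseteq{\rm dom}(h)$; since $h$ agrees with $g$ on ${\rm dom}(g)$ and sequential computation is deterministic, a straightforward induction on the sequence of queries shows that $\Phi_e[h](e)$ performs exactly the same query-response sequence as $\Phi_e[g](e)$, hence diverges without leaving ${\rm dom}(h)$, giving $K(h)(e)=0$. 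This second clause is the main subtlety of the argument: it is precisely the ``freezing'' branch in the definition of $K$ --- the very feature distinguishing $K$ from the naive $K_0$ of the preceding observation --- that prevents an extension $h\supseteq g$ from resolving a previously unanswered query and thereby breaking monotonicity.

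Given monotonicity, I iterate $\Gamma$ transfinitely: set $f_0=\emptyset$, $f_{\alpha+1}=\Gamma(f_\alpha)$, and $f_\lambda=\bigcup_{\alpha<\lambda}f_\alpha$ at limits. Monotonicity makes $(f_\alpha)_\alpha$ a weakly increasing chain of partial functions whose graphs live inside the countable set $\om\times\om$, so at most countably many strict inclusions can occur and the chain stabilizes at some countable ordinal $\alpha^*$; then $f:=f_{\alpha^*}$ satisfies $\Gamma(f)=f$ and is the required fixed point.
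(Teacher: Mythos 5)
Your proof is correct, and it takes a genuinely different route from the paper's. The paper constructs an explicit fixed point $\psi$ built from $\Sigma^1_1$ and $\Pi^1_1$ truth predicates, and shows $K(\psi)\subT\psi$ by exhibiting both a $\Sigma^1_1$ and a $\Pi^1_1$ description of the halting set $\{e: \Phi_e[\psi](e)\downarrow\}$ (valid on $\dom(K(\psi))$, where the two descriptions agree), so that $\psi$ itself can answer halting queries. You instead prove an abstract monotone-operator fixed-point theorem: you observe that $K$ is monotone with respect to set-theoretic inclusion of graphs, and then run a Knaster--Tarski transfinite iteration that must stabilize at some countable ordinal, producing $f=\Gamma(f)$, whence $K(f)\subT f$ by the coding and $f\subT K(f)$ by the already-proved uniform inflationarity. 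The monotonicity argument is sound; in particular, your handling of the $K(g)(e)=0$ branch is exactly the point at which $K$ differs from the naive $K_0$: because a ``$0$''-answer certifies that \emph{all} queries stayed inside $\dom(g)$, an extension $h\supseteq g$ runs the identical computation, so the branch cannot flip. The chain $(f_\alpha)$ is increasing by an easy induction (using only that $\Gamma$ is inclusion-monotone and $f_0=\emptyset$), limits are well-defined unions, and stabilization below $\omega_1$ follows from the countability of $\omega\times\omega$. One could streamline by taking $\Gamma = K$ directly (the $\langle e,0\rangle$ padding is harmless but unnecessary). The trade-off is informational: the paper's construction places the fixed point concretely at the level of the $\Sigma^1_1/\Pi^1_1$ truth predicates, whereas your argument gives existence without an explicit complexity bound beyond ``some countable iterate of $K$.'' On the other hand, your approach isolates the structural property (inclusion-monotonicity of $K$) actually responsible for the fixed point, and the same template would apply verbatim to any inclusion-monotone, uniformly inflationary jump candidate.
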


\begin{proof}
Let $\theta_e^\Sigma$ and $\theta_e^\Pi$ be the $e$th $\Sigma^1_1$ sentence and the $e$th $\Pi^1_1$ sentence, respectively.
Define $\psi$ as follows:
\[
\psi(a,b)=
\begin{cases}
1&\mbox{ if both $\theta_a^\Sigma$ and $\theta_b^\Pi$ are true}\\
0&\mbox{ if both $\theta_a^\Sigma$ and $\theta_b^\Pi$ are false}\\
\uparrow&\mbox{ otherwise}
\end{cases}
\]

We claim $K(\psi)\subT\psi$.
To see this, we simulate the computation of $\Phi_e[\psi](e)$ for given $e\in{\rm dom}(K(\psi))$.
The history of the computation of $\Phi_e[\psi](e)$ of length $s$ is a sequence $(i_0,q_0,a_0,i_1,q_1,a_1,\dots,i_s,q_s)$ such that,  for any $k<s$, $i_k=0$ and ${\Phi_e(e,a_0,\dots,a_k)\downarrow}=\langle i_{k+1},q_{k+1}\rangle$, and if $q_k=\langle u,v\rangle$ then
\begin{align}\label{equ:jump-fixedpoint}
\mbox{either ($\theta_u^\Sigma$ is true and $a_k=1$) or ($\theta_v^\Pi$ is false and $a_k=0$).}
\end{align}

This gives a $\Sigma^1_1$-description of being a history of some computation.
If $e\in{\rm dom}(K(\psi))$, then we must have $q_k=\langle u,v\rangle\in{\rm dom}(\psi)$, which means that the truth values of $\theta_u^\Sigma$ and $\theta_v^\Pi$ are the same.
Thus, the condition (\ref{equ:jump-fixedpoint}) can be changed to the following:
\begin{align}\label{equ:jump-fixedpoint2}
\mbox{either ($\theta_v^\Pi$ is true and $a_k=1$) or ($\theta_u^\Sigma$ is false and $a_k=0$).}
\end{align}

This gives a $\Pi^1_1$-description of being a history of some computation.
Now, $\Phi_e[\psi](e)$ halts if and only if there exists a sequence which is the history of $\Phi_e[\psi](e)$ of some length $s$ such that $i_s=1$.
This is described just by adding existential quantifiers on natural numbers, so this also has $\Sigma^1_1$-description $\theta_{c(e)}^\Sigma$ and $\Pi^1_1$-description $\theta_{d(e)}^\Pi$.
Therefore, for $e\in{\rm dom}(K(\psi))$, whether $\Phi_e[\psi](e)$ halts or not can be decided by looking at the value of $\psi(c(e),d(e))$.
This verifies the claim.
\end{proof}

\begin{question}
Does there exist a uniformly inflationary, uniformly monotone, uniformly conservative, strict operator on the subTuring degrees?
\end{question}

\section{Degree-theoretic properties}\label{sec:theorem}

\subsection{Density}

Many degree structures, including the Turing degrees, are not globally dense, even though they may have some local substructures which are dense.
Interestingly, in contrast to those degree structures, the structure of the subTuring degrees is globally dense.

\begin{theorem}\label{thm:subTuring-dense}
The subTuring degrees are dense; that is, if $g<_{\rm subT}f$ then there is $h$ such that $g<_{\rm subT}h<_{\rm subT}f$.
\end{theorem}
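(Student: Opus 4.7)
The plan is to build $h$ in the form $h = g \oplus (f \upto A)$ for a carefully chosen $A \subseteq \dom(f)$, constructed by a finite-extension argument. Regardless of how $A$ is chosen, one has $h \subT f$ (because $g \subT f$ and $f \upto A \subseteq f$) and $g \subT h$ (by reading off the even coordinates), so the task reduces to arranging $h \not\subT g$ and $f \not\subT h$.

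Work with forcing conditions $(I, O)$ consisting of disjoint finite sets with $I \subseteq \dom(f)$, interpreted as the commitments $I \subseteq A$ and $O \cap A = \emptyset$; extension is componentwise inclusion. At successive stages one meets the requirements
\[
R^1_e \colon \Phi_e[g] \not\supseteq h, \qquad R^2_e \colon \Phi_e[h] \not\supseteq f.
\]
For $R^1_e$, if only finitely many $n \in \dom(f)$ satisfied $\Phi_e[g](2n+1) \neq f(n)$, then patching $\Phi_e[g]$ at those finitely many points would yield a subTuring reduction witnessing $f \subT g$, contradicting $g <_{\rm subT} f$. Hence infinitely many such $n$ exist, and any one outside $I \cup O$ can be added to $I$, making $h(2n+1) = f(n) \neq \Phi_e[g](2n+1)$.

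The strategy for $R^2_e$ is the main obstacle. Given the current condition $(I, O)$, simulate $\Phi_e[h](y)$ for each $y \in \dom(f)$, answering even-side queries from $g$ and odd-side queries $2n+1$ with $n \in I$ from the finite table $f \upto I$. If for some such $y$ the simulation halts with a value $\neq f(y)$, runs forever internally, or reaches a query already known to lie outside $\dom(h)$, then $R^2_e$ is already fulfilled. If for some $y$ the simulation reaches an odd-side query $2n+1$ with $n$ uncommitted, extend by $(I, O \cup \{n\})$, so that the computation freezes on that query, giving $\Phi_e[h](y) \uparrow \neq f(y)$. The crux is to rule out the remaining case, in which for every $y \in \dom(f)$ the simulation halts with value exactly $f(y)$ using only $g$-side queries inside $\dom(g)$ and $f$-side queries inside $I$; but then the simulation, viewed as a partial computable procedure taking $g$ as its only oracle with the finite function $f \upto I$ hardcoded, would itself realize $f \subT g$, contradicting the hypothesis. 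Passing to a sufficiently generic filter through these dense sets produces an $A$ for which $h = g \oplus (f \upto A)$ has a subTuring degree strictly between $g$ and $f$.
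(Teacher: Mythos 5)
Your proof is correct, and it takes a genuinely different route from the one in the paper. Both proofs build the intermediate element as $g \oplus (f \upto A)$ for a suitable $A \subseteq \dom(f)$, and both dispose of $h \not\subT g$ via essentially the same observation (patching finitely many values of $\Phi_e[g]$ on the odd coordinates would give $f \subT g$). The difference lies in how $f \not\subT h$ is obtained. The paper factors this through Lemma~\ref{lem:strong-anti-cupping}: it constructs $A$ to be $(\alpha \oplus f)$-immune (where $\dom(g)$ is $\alpha$-c.e.), and the lemma then shows, for \emph{all} reductions simultaneously, that any sequential computation from $(f\upto A)\oplus g$ converging on $\dom(g)$ queries $f$ only on a c.e.\ (hence, by immunity, finite) subset of $A$, so it factors through $g$. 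You instead run a Kleene--Post style forcing against each $\Phi_e$ individually: either you can freeze a fresh odd-side query (thereby forcing divergence), or one of the other termination modes already diagonalizes, or else the $\Phi_e$-simulation with the finite table $f\upto I$ hardcoded is itself a reduction witnessing $f \subT g$, which is impossible. This ``finiteness of the query set'' arrives per-$e$ as a byproduct of being unable to diagonalize, rather than once and for all via immunity. Your argument is more self-contained and elementary; the paper's pays the up-front cost of the anti-cupping lemma but gets a reusable tool (it is applied again in Theorems~\ref{thm:dense-continuum} and~\ref{thm:quasiminimal-jump-inversiona}) and a slightly stronger conclusion, namely that $g \oplus (f\upto A)$ is a \emph{strong quasiminimal cover} of $g$, since immunity kills every total reduction from $h$ at once, not merely $\Phi_e[h] \supseteq f$. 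One small point worth making explicit in your write-up: in case (d) the uncommitted $n$ should be taken in $\dom(f)$ (otherwise $2n+1 \notin \dom(h)$ automatically and you are in case (c)); with $I \subseteq \dom(f)$ this is harmless but should be stated.
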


In order to prove this, let us introduce a very useful ``anti-cupping'' lemma that will be the source of many results.
In order to describe a statement, we need the following notion:
For an oracle $\alpha$, a set $A\subseteq\om$ is {\em $\alpha$-immune} if $A$ has no $\alpha$-c.e.~subset.

\begin{lemma}\label{lem:strong-anti-cupping}
For $\alpha,\beta,f\in\om^\om$, let $A\subseteq\om$ be an $(\alpha\oplus\beta\oplus f)$-immune set.
For any $g\pcolon\om\to\om$, if ${\rm dom}(g)$ is $\alpha$-c.e., and $g\subT (f\upto A)\oplus\beta$, then $g\subT\beta$.
\end{lemma}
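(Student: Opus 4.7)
The plan is as follows. Fix a partial computable $\Phi$ witnessing $g\subT (f\upto A)\oplus\beta$, so that $\Phi[(f\upto A)\oplus\beta](n)\downarrow = g(n)$ for every $n\in\dom(g)$. Let
\[
Q = \{q : q \text{ is a first-coordinate query made during } \Phi[(f\upto A)\oplus\beta](n) \text{ for some } n\in\dom(g)\}.
\]
Because each of these computations halts, every query to the first coordinate must land in $\dom(f\upto A) = A$, so $Q\subseteq A$. The goal is to show $Q$ is finite, and then hard-code $f\upto Q$ into a new reduction to obtain $g\subT\beta$.

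For the key step, I would show $Q$ is $(\alpha\oplus\beta\oplus f)$-c.e.\ and invoke the immunity of $A$. By monotonicity of sequential computation, whenever $\Phi[(f\upto A)\oplus\beta](n)$ halts it does so via a computation path on which every first-coordinate query lies in $A$; since $f$ and $f\upto A$ agree on $A$, replacing the oracle $f\upto A$ by $f$ produces an identical computation. Hence $Q$ coincides with the set of first-coordinate queries made by $\Phi[f\oplus\beta](n)$ for $n\in\dom(g)$, which an $(\alpha\oplus\beta\oplus f)$-oracle can enumerate: enumerate $\dom(g)$ using $\alpha$, and for each $n$ enumerated, simulate $\Phi$ with oracle $f\oplus\beta$ and list the queries it makes to the first coordinate. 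By the $(\alpha\oplus\beta\oplus f)$-immunity of $A$ (read in the standard sense: no infinite $(\alpha\oplus\beta\oplus f)$-c.e.\ subset), it follows that $Q$ is finite.

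Finally, with $Q = \{q_1,\ldots,q_k\}$ finite and values $v_i = f(q_i)$ fixed, I would build a partial computable $\Psi$ that, on input $n$, simulates $\Phi$ by answering any first-coordinate query $q\in Q$ with the hard-coded $v_q$, diverges if the simulation ever queries some $q\notin Q$ to the first coordinate, and routes second-coordinate queries directly to $\beta$. For $n\in\dom(g)$, all first-coordinate queries of $\Phi[(f\upto A)\oplus\beta](n)$ lie in $Q$ and receive the correct responses, so $\Psi[\beta](n)=g(n)$, giving $g\subseteq\Psi[\beta]$, i.e., $g\subT\beta$. The main obstacle is the identification of $Q$ with the query set of $\Phi[f\oplus\beta]$ on $\dom(g)$, which is exactly what allows us to enumerate $Q$ from $\alpha\oplus\beta\oplus f$ without any direct handle on $\chi_A$; once that identification is in place, the immunity hypothesis does the real work of collapsing the oracle $(f\upto A)\oplus\beta$ down to $\beta$ modulo finite advice.
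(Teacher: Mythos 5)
Your proof is correct and takes essentially the same approach as the paper's: collect the $f$-side queries made by $\Phi$ on inputs in $\dom(g)$, observe via monotonicity that this set is $(\alpha\oplus\beta\oplus f)$-c.e.\ and contained in $A$, conclude by immunity that it is finite, and then hard-code $f$ on that finite set to drop the $f\upto A$ oracle. The only cosmetic difference is that you immediately define $Q$ as the query set restricted to $\dom(g)$, whereas the paper first notes the full query set is $(f\oplus\beta)$-c.e.\ before restricting, but the substance and the use of the $\alpha$-c.e.\ hypothesis on $\dom(g)$ are the same.
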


\begin{proof}
Assume that $g\subT (f\upto A)\oplus\beta$ via a partial computable function $\Phi$.
By monotonicity, we have $g\subseteq\Phi[(f\upto A)\oplus\beta]\subseteq\Phi[f\oplus\beta]$, so there is no difference between the computation of $\Phi[(f\upto A)\oplus\beta]$ and $\Phi[f\oplus\beta]$ for input $n\in{\rm dom}(g)$.
Now, look at the queries $q_0,q_1,\dots$ made during the computation of $\Phi[f\oplus\beta](n)$, take out the even queries $2p_0,2p_1,\dots$ (that is, the queries asking to $f$), and put $Q_n=\{p_0,p_1,\dots\}$, which is the set of queries asking to $f$.
Note that $\bigcup_nQ_n$ is clearly $(f\oplus\beta)$-c.e.
Furthermore, since $g\subseteq\Phi[(f\upto A)\oplus\beta]$, the queries to $f$ that we make for input $n\in{\rm dom}(g)$ must be contained in ${\rm dom}(f\upto A)=A$; that is, if $n\in{\rm dom}(g)$, then $Q_n\subseteq A$.
Consider $Q=\bigcup\{Q_n:n\in{\rm dom}(g)\}$, which is an $(\alpha\oplus\beta\oplus f)$-c.e.~subset of $A$.
By immunity, $Q$ is finite.
Since this is all the queries to $f$ during the computation of $g$ by $\Phi$, we obtain $g\subseteq\Phi[(f\upto Q)\oplus\beta]$.
However, since $f\upto Q$ is a finite function, it is computable, so we get $g\subT\beta$ as desired.
\end{proof}

Lemma \ref{lem:strong-anti-cupping} suggests that all nonzero degrees have properties close to the so-called strong anticupping property.
Here, for an upper semilattice $L$, we say that an element $a\in L$ has the {\em strong anticupping property} if there exists $b\in L$ such that $a\leq b\lor c$ implies $a\leq c$ for any $c\in L$.
Note that $A$ depends on $\beta$ in Lemma \ref{lem:strong-anti-cupping}, so it does not show that strong anticupping property.
However, it is powerful enough to help prove various results discussed below.

A partial function $f\pcolon\om\to\om$ is {\em quasiminimal} if $f$ is non-computable, but any total function $g\subT f$ is computable.

\begin{prop}\label{prop:quasiminimal-subTuring}
Every nonzero subTuring degree bounds a quasiminimal subTuring degree.
\end{prop}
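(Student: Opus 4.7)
The plan is to find, below a given nonzero $f\pcolon\om\to\om$, a function of the form $h=f\upto A$ which is quasiminimal. The inequality $h\subT f$ will be automatic because the trivial functional that on input $n$ queries $f(n)$ and halts with the reply realizes $f\upto A\subseteq\Phi[f]$ for every $A$. What remains is to choose $A$ so that (a) $h$ has no partial computable extension and (b) every total $g\subT h$ is computable.

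The key move is that (b) is handed to us by Lemma~\ref{lem:strong-anti-cupping} applied with $\alpha=\beta=\emptyset$: since $\dom(g)=\om$ is trivially $\emptyset$-c.e., $f$-immunity of $A$ alone forces any total $g\subT f\upto A$ to satisfy $g\subT\emptyset$, i.e., to be computable. So a single immunity-style property of $A$ suffices for the ``hard'' half of quasiminimality. For (a) the observation that unlocks the construction is that $f$ having no partial computable extension forces, for every index $e$, the set $D_e=\{n\in\dom(f):\Phi_e(n)\uparrow\text{ or }\Phi_e(n)\neq f(n)\}$ to be infinite; otherwise patching $\Phi_e$ on the finite set $D_e$ with the correct $f$-values would produce a partial computable extension of $f$.

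With these two ingredients the argument is a standard finite-extension construction producing approximations $(p_s^+,p_s^-)$ to $A$ (commitments to ``in'' and ``out'' of $A$) and interleaving two families of requirements. Requirement $R_e$ picks some $n\in D_e$ outside $p_s^+\cup p_s^-$ and commits it to $A$, which is always possible since $D_e$ is infinite while the commitments at each stage are finite. Requirement $Q_e$, addressing the $e$-th $f$-c.e.\ set $W_e^f$, picks some $m\in W_e^f$ outside the current commitments and keeps it out of $A$, with no action needed when $W_e^f$ is finite. The $R_e$ witnesses certify that no $\Phi_e$ extends $f\upto A$, giving (a), and the $Q_e$ actions ensure $A$ is $f$-immune, so that Lemma~\ref{lem:strong-anti-cupping} delivers (b).

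The genuine obstacle is conceptual rather than technical: one must interpret ``$f$-c.e.'' carefully when $f$ is partial, and accept that the construction is not required to be recursive in $f$ at all --- decisions like ``is $W_e^f$ infinite?'' or ``choose an element of $W_e^f$ outside the current commitments'' are resolved by an outside oracle of any sufficient strength (say $f^{(\om)}$), because only the existence of $A$ with the two listed properties is relevant to the conclusion. Once these interpretive points are settled the verification is only a few lines and Proposition~\ref{prop:quasiminimal-subTuring} follows.
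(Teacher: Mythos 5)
Your proposal is correct and matches the paper's proof in all essentials: restrict $f$ to an $f$-immune set $A$ via a finite-extension construction interleaving a diagonalization requirement (to kill each candidate partial computable extension $\varphi_e$ of $f\upto A$, exploiting that $f$ being in a nonzero degree makes your $D_e$ infinite) with an immunity requirement (to thin $A$ against each infinite $W_e^f$), then invoke Lemma~\ref{lem:strong-anti-cupping} with $\alpha=\beta=\emptyset$ so that $f$-immunity of $A$ forces every total $g\subT f\upto A$ to be computable. The presentational differences — your explicit commitment sets $(p_s^+,p_s^-)$ versus the paper's markers $r_e,u_e$, and your side remark that the search need not be effective — are cosmetic.
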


\begin{proof}
Let $f\pcolon\om\to\om$ be a non-computable function.
By Lemma \ref{lem:strong-anti-cupping}, if $A$ is $f$-immune then every total function $g\subT f\upto A$  is computable.
We also have $f\upto A\subT f$.
Therefore, it suffices to construct an $f$-immune set $A$ such that $f\upto A$ is not computable.

At stage $e$, perform the following actions:
\begin{enumerate}
\item Assume that $A\upto r_e$ has already been determined.
Search for $n>r_e$ such that either $\varphi_e(n)\uparrow$ or $\varphi_e(n)\not=f(n)$.
Note that such an $n$ always exists; otherwise ${\varphi_e(n)\downarrow}=f(n)$ for any $n>r_e$, which implies that $f$ is computable, a contradiction.
Choose such an $n$, declare $n\in A$, and put $u_{e}=n+1$.

\item 
Search for $n>u_e$ such that $n\in W_e^f$, where $W_e^f$ is the $e$th c.e.~set relative to $f$.
If $W_e^f$ is infinite, such an $n$ exists.
For such an $n$, declare $n\not\in A$, and put $r_{e+1}=n+1$.
If there is no such $n$, put $r_{e+1}=u_e$.
\end{enumerate}

By the action (1), for any $e$ we have $\varphi_e(n)\not=f(n)$ for some $n\in A$, so $f\upto A$ is non-computable.
By the action (2), the final obtained $A$ is $f$-immune.
Thus, by the argument above, $f\upto A$ is quasiminimal.
\end{proof}


Now let us show density of the subTuring degrees.

\begin{proof}[Proof (Theorem \ref{thm:subTuring-dense})]
Let $f,g\pcolon\om\to\om$ be such that $g<_{\rm subT} f$.
Assume that the domain of $g$ is $\alpha$-c.e.
As in the proof of Proposition \ref{prop:quasiminimal-subTuring}, one can easily construct an $(\alpha\oplus f)$-immune set $A$ such that $f\upto A\not\subT g$ (by using the assumption $f\not\subT g$).
Then we have $f\not\subT (f\upto A)\oplus g$; otherwise, by Lemma \ref{lem:strong-anti-cupping}, $f\subT (f\upto A)\oplus g$ implies $f\subT g$, a contradiction.
Consequently, we get $g<_{\rm subT}(f\upto A)\oplus g<_{\rm subT}f$.
\end{proof}

The above proof actually shows the existence of strong quasiminimal cover in any nonempty interval of subTuring degrees.
Here, we say that ${\bf a}$ is a {\em strong quasiminimal cover} of ${\bf b}$ if ${\bf b}<{\bf a}$ and, for any total degree ${\bf c}$, ${\bf c}\leq{\bf a}$ implies ${\bf c}\leq{\bf b}$.
This is because if $h\subT (f\upto A)\oplus g$ is total then $h\subT g$; hence $(f\upto A)\oplus g$ is a strong quasiminimal cover of $g$.

Many degree structures, including the Turing degrees, are known to be locally countable, i.e., any given degree can bound only countably many different degrees.
Surprisingly, the structure of subTuring degrees is not locally countable.
In fact, any nonzero subTuring degree can bound continuum many different degrees.

\begin{theorem}\label{thm:dense-continuum}
Any nonempty interval of subTuring degrees contains an antichain of continuum many different degrees.

In other words, if $g<_{\rm subT}f$ then there exists $\{h_\alpha\}_{\alpha<2^{\aleph_0}}$ such that $g<_{\rm subT}h_\alpha<_{\rm subT}f$  and $h_\alpha\not\subT h_\beta$ whenever $\alpha\not=\beta$.
\end{theorem}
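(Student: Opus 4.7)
The plan is to extend the finite-extension argument behind Theorem \ref{thm:subTuring-dense} to a full binary tree of conditions indexed by $2^{<\omega}$. For each $\sigma \in 2^{<\omega}$ I inductively construct a finite condition $p_\sigma = (I_\sigma, O_\sigma)$ consisting of two disjoint finite subsets of $\omega$, preserving $I_\sigma \subseteq I_\tau$ and $O_\sigma \subseteq O_\tau$ whenever $\sigma \preceq \tau$. For each branch $\pi \in 2^\omega$ I set $A_\pi := \bigcup_n I_{\pi \upto n}$ and $h_\pi := (f \upto A_\pi) \oplus g$ (I use $\pi,\rho$ for branches to avoid clashing with the lemma's parameter names); the $h_\pi$'s will form the desired antichain.

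The construction simultaneously handles three families of countable requirements, scheduled via a standard enumeration of stages. First, \emph{diagonalization} $P^\sigma_e$: find $n \in \mathrm{dom}(f) \setminus O_\sigma$ with $\Phi_e[g](n) \neq f(n)$ (cofinitely many such $n$ exist, since $f \not\subT g$ rules out any finite patch of $\Phi_e[g]$ from extending $f$) and add $n$ to $I_\sigma$; this forces $f \upto A_\pi \not\subT g$ for every $\pi \succeq \sigma$. Second, \emph{within-branch immunity} $N^\sigma_e$: place one freshly enumerated element of the $e$th $(f \oplus g)$-c.e.\ set into $O_\sigma$, ensuring $A_\pi$ is $(f \oplus g)$-immune for every $\pi \succeq \sigma$. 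Third, \emph{mutual immunity} $M^{\gamma,\tau}_e$, indexed by pairs of incomparable nodes $\gamma, \tau \in 2^{<\omega}$ and $e \in \omega$: enumerate $\Phi_e$ with oracle $I_\gamma \oplus f \oplus g$ for a few more steps, and for every enumerated element $m \notin I_\tau$ add $m$ to $O_\tau$.

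The positive and within-branch requirements reproduce the situation of Theorem \ref{thm:subTuring-dense}: combined with Lemma \ref{lem:strong-anti-cupping} (applied with immune set $A_\pi$, with the lemma's $\beta$ taken to be $g$, and with any oracle computing $\mathrm{dom}(f)$ in the role of the lemma's $\alpha$), they give $g <_{\rm subT} h_\pi <_{\rm subT} f$ for every $\pi$. For pairwise incomparability, let $\pi \neq \rho$, let $\sigma := \pi \wedge \rho$ be their longest common initial segment, and assume WLOG $\sigma \fr 0 \preceq \pi$ and $\sigma \fr 1 \preceq \rho$. The requirements $M^{\pi \upto n,\, \sigma \fr 1}_e$ for $n > |\sigma|$ collectively guarantee that $A_\rho$ is $(A_\pi \oplus f \oplus g)$-immune, since any $m \in W_e^{A_\pi \oplus f \oplus g}$ is witnessed by a finite portion of $A_\pi$ contained in some $I_{\pi \upto n}$, so $m$ is enumerated at stage $M^{\pi \upto n,\, \sigma \fr 1}_e$ and (if fresh) placed in $O_{\sigma \fr 1} \subseteq O_\rho$. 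A second application of Lemma \ref{lem:strong-anti-cupping}---this time with immune set $A_\rho$, with $A_\pi \oplus f,\ g,\ f$ in the roles of the lemma's $\alpha,\beta,f$, and with the partial function being reduced taken to be $f \upto A_\pi$ (noting $\mathrm{dom}(f \upto A_\pi)$ is $(A_\pi \oplus f)$-c.e.)---then shows that $f \upto A_\pi \subT h_\rho$ would force $f \upto A_\pi \subT g$, contradicting $P$. Hence $h_\pi \not\subT h_\rho$.

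The main obstacle is the delicate interaction among the countably many requirements: although only countably many triples $(\gamma, \tau, e)$ are scheduled, they must simultaneously witness immunity for continuum many pairs $(\pi, \rho)$ of branches. This is overcome by the finite-use principle above, together with a \emph{freshness discipline}: every positive addition to an $I$-set uses a witness strictly larger than all numbers enumerated by any previously processed $M$-strategy. This guarantees that cofinitely many elements of any infinite $W_e^{A_\pi \oplus f \oplus g}$ fall outside $I_\tau$ and hence do get placed in $O_\tau$, as required.
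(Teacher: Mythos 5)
Your overall architecture is the same as the paper's: build a family $\{A_\pi\}_{\pi\in 2^\omega}$ by finite conditions indexed by $2^{<\omega}$, arrange that $f\upto A_\pi\not\subT g$ and that $A_\rho$ is immune relative to $A_\pi\oplus f\oplus(\cdots)$ for $\pi\neq\rho$, set $h_\pi=(f\upto A_\pi)\oplus g$, and invoke Lemma~\ref{lem:strong-anti-cupping}. The difference is in how mutual immunity is secured, and this is where there is a genuine gap. Your mutual-immunity strategy $M^{\gamma,\tau}_e$ passively enumerates $\Phi_e$ from the \emph{current} finite approximation $I_\gamma$ and dumps fresh elements into $O_\tau$, while the freshness discipline only forces a $P$-witness chosen at stage $s$ to exceed the elements $M$-enumerated \emph{by} stage $s$. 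But as the approximation $I_{\pi\upto n}$ lengthens, new elements of $W_e^{A_\pi\oplus f\oplus g}$ keep appearing; a $P$-witness chosen at an earlier stage may coincide with such a later-enumerated element, at which point the $M$-strategy can no longer restrain it, and nothing prevents the accumulated $P$-witnesses along $\rho$ from eventually covering $W_e^{A_\pi\oplus f\oplus g}$. The asserted consequence, that ``cofinitely many elements of any infinite $W_e^{A_\pi\oplus f\oplus g}$ fall outside $I_\tau$,'' is not forced by what you have stated.

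The paper's construction closes exactly this gap by reversing the roles: at stage $e$, for each string $\sigma\fr i$ of length $e+1$, it \emph{actively} searches for a finite extension $\rho$ of the current condition $A_\sigma\upto u_e^t$ together with a number $n>u_e^t$ (that is, a number strictly larger than everything decided so far) with $n\in W_e^{\alpha\oplus\rho\oplus f}$, commits $A_{\sigma\fr i}$ to extend $\rho$, and declares $n\notin A_\tau$ for every other length-$(e+1)$ string $\tau$. Because $n$ is chosen fresh at the moment of this forcing step and all later positive witnesses are chosen even larger, $n$ can never subsequently be recruited as a diagonalization witness, so the element escaping $A_\tau$ is secured once and for all. (Combined with the padding trick $W_d=W_e$ for $d\geq k$, this covers every pair $(\pi,\rho)$ of branches.) To repair your argument you would need $M^{\gamma,\tau}_e$ to likewise extend the condition along $\gamma$ and reserve a fresh enumerated witness, rather than only record elements already visible. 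Two smaller points: for the finite-oracle enumeration $W_e^{I_\gamma\oplus f\oplus g}$ to soundly under-approximate $W_e^{A_\pi\oplus f\oplus g}$, the conditions $(I_\sigma,O_\sigma)$ should determine initial segments (as the paper's $A_\sigma\upto r_e$ do), and you should only count computations whose use lies inside the decided portion; and your within-branch requirement $N^\sigma_e$ gives only $(f\oplus g)$-immunity, whereas your own first application of Lemma~\ref{lem:strong-anti-cupping} needs $(\alpha\oplus f\oplus g)$-immunity for the $\alpha$ you chose, an easy fix but worth stating.
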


\begin{proof}
The proof is almost the same as the proof of Proposition \ref{prop:quasiminimal-subTuring}.
Let $f,g\pcolon\om\to\om$ be such that $g<_{\rm subT} f$, and assume that the domain of $g$ is $\alpha$-c.e.
We construct a family $\{A_x\}_{x\in 2^\om}$ of continuum many subsets of $\om$ such that each $A_x$ is $(\alpha\oplus A_y\oplus f)$-immune for any $y\not=x$.
Moreover, we ensure that $f\upto A_x\not\subT g$ for any $x\in 2^\om$.
Assuming that we could construct such a family, consider $h_x=(f\upto A_x)\oplus g$ for any $x\in 2^\om$.
If $x\not=y$, by Lemma \ref{lem:strong-anti-cupping}, $h_y\subT h_x$ would imply $f\upto A_y\subT g$, which is impossible by our assumption on $\{A_x\}_{x\in 2^\om}$.
Hence, $\{h_x\}_{x\in 2^\om}$ gives an antichain of continuum many subTuring degrees.
Moreover, as in Theorem \ref{thm:subTuring-dense}, one can also see that $g<_{\rm subT}h_x<_{\rm subT}f$.
Therefore, it suffices to construct such a family $\{A_x\}_{x\in 2^\om}$.

At stage $e$, perform the following actions:
\begin{enumerate}
\item Assume that $A_\sigma\upto r_e$ has already been determined for each $\sigma\in 2^e$.
Search for $n\in{\rm dom}(f)$ with $n>r_e$ such that either $\Phi_e[g](n)\uparrow$ or $\Phi_e[g](n)\not=f(n)$.
Note that such an $n$ always exists; otherwise ${\Phi_e[g](n)\downarrow}=f(n)$ for any $n\in{\rm dom}(f)$ with $n>r_e$, which implies that $f\subT g$, a contradiction.
Choose such an $n$, declare $n\in A_\sigma$ for any $\sigma\in 2^e$, and put $u_{e}=n+1$.

\item 
There are $2^{e+1}$ substages in the strategy to guarantee relative immunity.
At substage $0$, put $u^0_e=u_e$.
At substage $t<2^{e-1}$, we focus on the $t$-th string $\sigma\fr i\in 2^{e+1}$.
Search for a finite string $\rho$ extending $A_\sigma\upto u_e^t$ and a number $n>u_e^t$ such that $n\in W_e^{\alpha\oplus \rho\oplus f}$.
If such $\rho$ and $n$ exist, then declare that $A_{\sigma\fr i}$ extends $\rho$, and protect $A_{\sigma\fr i}\upto |\rho|+1$.
Moreover, declare $n\not\in A_\tau$ for any string $\tau\not=\sigma\fr i$ of length $e+1$.
This means that we put $u_e^{t+1}=\max\{|\rho|,n\}+1$.
If there exist no such $\rho$ and $n$, put $A_{\sigma\fr i}=A_\sigma$ and $u_e^{t+1}=u_e$.
If $t=2^{e+1}-1$ then put $r_e=u_e^{t+1}$.
\end{enumerate}

Finally, we define $A_x=\bigcup_{\sigma\prec x}A_\sigma$ for any $x\in 2^\om$.
By the action (1), we get $f\upto A_\sigma\not\subseteq \Phi_e[g]$ for any $\sigma\in 2^e$, so $f\upto A_x\not\subseteq \Phi_e[g]$ since $A_\sigma\subseteq A_x$ for any $x\succ\sigma$; hence $f\upto A_x\not\subT g$ for any $x\in 2^\om$.
By the action (2), since $A_x$ extends $A_{x\upto e+1}\upto u_e^{t}$ for a suitable $t$, if $W_e^{\alpha\oplus A_x\oplus f}$ is infinite, we always find $\rho$ and $n>u_e^t$ such that $n\in W_e^{\alpha\oplus\rho\oplus f}$.
If $x\upto e\not= y\upto e$, then the action (2) ensures $n\not\in A_y$, so we obtain $W_e^{\alpha\oplus A_x\oplus f}\not\subseteq A_y$.
If $x\not=y$ then there is $k$ such that $x\upto k\not=y\upto k$, so given $e$, consider $d\geq k$ such that $W_d^{B}=W_e^{B}$ for any $B$ (for example, add meaningless lines to the program $e$ to pad its size).
By the above argument, we get $W_e^{\alpha\oplus A_x\oplus f}=W_d^{\alpha\oplus A_x\oplus f}\not\subseteq A_y$.
Consequently, $A_y$ is $(\alpha\oplus A_x\oplus f)$-immune.
\end{proof}

Note that $h_x$ in Theorem \ref{thm:dense-continuum} is a strong quasiminimal cover of $g$ for any $x\in 2^\om$.



\subsection{Jump inversion}
In Turing degree theory, the Friedberg jump inversion theorem states that for every total function $h\geq_T\emptyset'$ there is a total function $f$ such that $f'\equiv_Th$.
In enumeration degree theory, this has been improved to the quasiminimal jump inversion theorem.
Although the proof is different from the usual method, one can also prove the (quasiminimal) jump inversion theorem for the subTuring degrees.
Here, we adopt the jump operator $K$ introduced in Section \ref{sec:jump-operator}.

\begin{theorem}\label{thm:quasiminimal-jump-inversiona}
For any partial function $h\geq_{\rm subT}\emptyset'$ there exists a quasiminimal partial function $f$ such that $K(f)\eqsubT h\eqsubT f\oplus\emptyset'$.
\end{theorem}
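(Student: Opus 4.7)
My plan is to adapt the Friedberg quasiminimal jump inversion theorem to the subTuring setting, constructing $f = \bigcup_s \sigma_s$ as a union of finite partial functions by an $h$-effective finite-extension argument. I would partition $\omega$ into three disjoint computable regions and, on each, interleave one family of requirements: coding $P_n$ (encoding $h$ into $K(f)$ via a fixed computable injection $d$), lowness $N_e$ (making $K(f)$ readable from $f$ together with $\emptyset'$), and quasiminimality $Q_e$ (making $\dom(f)$ $f$-immune so that Lemma~\ref{lem:strong-anti-cupping} applies).

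For the coding gadget $\Phi_{d(n)}$, I would design it to sequentially query the $n$-th coding column for a positive value. At the coding stage for $n$, consult the $h$-oracle: if $h(n){\downarrow}=1$, place a single $1$ at an adaptively chosen position $k_n$ (with zeros below) so the gadget halts, giving $K(f)(d(n))=1$; if $h(n){\downarrow}=0$, commit the whole column to $\dom(f)$ with value $0$ so the gadget safely loops, giving $K(f)(d(n))=0$; if $h(n){\uparrow}$, leave column $n$ permanently undefined from position $0$ so the gadget is stuck on an illegal query, giving $K(f)(d(n)){\uparrow}$. At lowness stage $e$, use $\emptyset'\subT h$ to decide whether some finite extension on the lowness column forces $\Phi_e[f](e){\downarrow}$, committing to one if so and otherwise preserving $\sigma_s$ on the positions relevant to $\Phi_e[f](e)$; this records $K(f)(e)$ in a form recoverable from $f \oplus \emptyset'$. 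For quasiminimality, run the immunity construction of Proposition~\ref{prop:quasiminimal-subTuring} on the third column.

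The four reducibilities then follow: $h \subT K(f)$ via $d$; $K(f) \subT h$ because the $h$-effective construction records every $K$-decision (consulting $h$ on coding indices and reproducing the lowness decision via $\emptyset' \subT h$ elsewhere); $f \oplus \emptyset' \subT h$ since $f \subT h$ and $\emptyset' \subT h$; and $h \subT f \oplus \emptyset'$ by running each gadget $\Phi_{d(n)}[f](d(n))$ with the $f$-oracle while using $\emptyset'$ (together with the lowness-recorded witnesses) to detect halting. Quasiminimality then follows from Lemma~\ref{lem:strong-anti-cupping} applied with $A = \dom(f)$.

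\textbf{The main obstacle} I anticipate is the tension between coding and quasiminimality: a naive coding placing $h(n)$ directly into an $f$-value makes $h \subT f$ as a (possibly total, non-computable) function, destroying quasiminimality whenever $h$ is total and non-computable. The resolution exploits the distinctive three-way distinction available in the subTuring jump $K$ --- halt, safe loop, stuck on an illegal query --- which lets one encode $h$ entirely in the halting pattern of the gadget rather than in any raw $f$-value; the adaptive growth of $k_n$ ensures that no total subT-reduction to $f$ can read $h$ off column $n$ within any uniformly bounded number of queries, and the immunity diagonalization of Proposition~\ref{prop:quasiminimal-subTuring} on a separate column handles all remaining total reductions via Lemma~\ref{lem:strong-anti-cupping}. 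Coordinating the three strategies on disjoint columns via a standard priority argument is then the technical bulk.
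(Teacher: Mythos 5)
Your proposal is a genuinely different route from the paper, but as written it has two significant gaps.

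\textbf{The paper's approach.} The paper builds a \emph{single} $\emptyset'$-computable increasing sequence $(a_k)$ and codes $h$ directly: $f(a_k)=h(k)$, $f$ undefined elsewhere. The key move is choosing $a_e$ \emph{so large}, via an $\emptyset'$-oracle question, that for every finite $\sigma$ defined only on $\{a_k\}_{k<e}$, the first query $>a_{e-1}$ ever made by $\Phi_e[\sigma](e)$ or $\Phi_e[\sigma](n)$ (for any $n$) falls strictly below $a_e$, hence into a hole of $\dom(f)$. This one spacing trick does triple duty: it localizes $\Phi_e[f](e)$ to $f\upto a_{e-1}+1$ so that $\emptyset'$ can decide the three-way outcome of $K(f)(e)$; it localizes \emph{every} $\Phi_e[f](n)$ when $\Phi_e[f]$ is total, giving quasiminimality directly (no immunity, no Lemma~\ref{lem:strong-anti-cupping}); and it makes $f\subT h\oplus\emptyset'$ and $h\subT f\oplus\emptyset'$ immediate. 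In particular the ``main obstacle'' you identify --- that writing $h(n)$ raw into $f$-values destroys quasiminimality --- is a misconception: the paper does exactly that, and quasiminimality survives because the coding locations $(a_k)$ are $\emptyset'$-computable but not computable, so $h\subT f\oplus\emptyset'$ without $h\subT f$. Your halting-pattern gadget is clever but unnecessary, and it is precisely what creates the problems below.

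\textbf{Gap 1: $\dom(f)$ is not immune, so Lemma~\ref{lem:strong-anti-cupping} does not apply as you use it.} Your coding gadget commits whole infinite columns to $\dom(f)$ when $h(n)=0$, and even for $h(n)=1$ it puts the initial segment $\{0,\dots,k_n\}$ of column $n$ into $\dom(f)$. Since $\dom(h)$ must be infinite (a finite-domain $h$ cannot bound $\emptyset'$), $\dom(f)$ contains an infinite, easily $f$-enumerable subset (e.g., the bottoms of all defined coding columns). Thus $\dom(f)$ is not $f$-immune and the invocation of Lemma~\ref{lem:strong-anti-cupping} with $A=\dom(f)$ fails. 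Running the immunity diagonalization only on a third column makes $\dom(f)\cap(\text{col }3)$ immune, but then a total $g\subT f$ could still pull noncomputable information from the coding and lowness columns; you would need a separate argument (which you don't give) that every total $g$ subTuring-below $f$ restricted to those columns is computable.

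\textbf{Gap 2: the three-valued outcome of $K(f)(e)$ is not determined by your lowness strategy.} A classical Friedberg-lowness step decides between ``halts'' and ``doesn't halt'', but $K(f)(e)$ distinguishes halting from \emph{safe looping} from \emph{hitting an illegal query}. When your strategy cannot force $\Phi_e[f](e)$ to halt and merely preserves $\sigma_s$ on relevant positions, whether the true computation eventually safe-loops or gets stuck depends on which positions \emph{outside} the preserved region later enter $\dom(f)$ --- decisions made by other strategies on other columns, possibly after injuries. As stated, nothing records which of the two non-halting outcomes occurs, so the claim that ``the $h$-effective construction records every $K$-decision'' is not justified. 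The paper sidesteps this entirely because the spacing trick guarantees that once $\Phi_e[f](e)$ leaves $\{a_k\}_{k<e}$ it is stuck, reducing the three-way question to a $\emptyset'$-decidable question about the finite function $f\upto a_{e-1}+1$.

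In short: your route (halting-pattern coding, three disjoint regions, a priority argument, immunity via Lemma~\ref{lem:strong-anti-cupping}) could perhaps be salvaged with substantially more work, but as written it is incomplete in exactly the two places where the paper's single-sequence spacing argument does the heavy lifting for free.
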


\begin{proof}
It suffices to show that, for any partial function $h$, there exists a quasiminimal $f$ such that $K(f)\subT h\oplus\emptyset'\subT f\oplus\emptyset'$ since $\emptyset'\subT K(\emptyset)\subT K(f)$ by monotonicity and conservativity.
Let us construct a $\emptyset'$-computable increasing sequence $(a_k)_{k\in\om}$ of natural numbers.
This is the coding location of $h$ in $f$; that is, we define $f(a_k)=h(k)$ for each $k\in\om$ and let $f$ be undefined except on $\{a_k\}_{k\in\om}$.

Assume that $(a_k)_{k<e}$ has already been defined.
For each partial function $\sigma$ defined only on $\{a_k\}_{k<e}$, ask $\emptyset'$ if the computation of $\Phi_e[\sigma](e)$ makes a query greater than $a_{e-1}$.
If so, let $q_\sigma$ be the first such query plus $1$, or else let $q_\sigma=a_{e-1}+1$.
Similarly, ask $\emptyset'$ if the computation of $\Phi_e[\sigma](n)$ for some $n$ makes a query greater than $a_{e-1}$.
If so, let $r_\sigma$ be the first such query plus $1$, or else let $r_\sigma=a_{e-1}+1$.
Then let $a_e$ be the maximum value of these $q_\sigma$ and $r_\sigma$'s.

This completes the construction.
We claim that the computation of $\Phi_e[f](e)$ is determined by $\Phi_e[f\upto a_{e-1}+1](e)$.
Otherwise, this computation creates a query to $f$ greater than $a_{e-1}$.
The computation is performed by $\Phi_e[f\upto a_{e-1}+1](e)$ until just before the first such query $q$ is created.
Note that the domain of $\sigma:=f\upto a_{e-1}+1$ is $\{a_k\}_{k<e}$, so our construction ensures $a_{e-1}<q<q_\sigma\leq a_e$, which implies $q\not\in{\rm dom}(f)$.
Therefore, $\Phi_e[f](e)$ does not halt.

Observe that $(a_k)_{k\in\om}$ is $\emptyset'$-computable.
Thus, it is clear from the definition of $f$ that $h\subT f\oplus\emptyset'$ and $f\subT h\oplus\emptyset'$ hold.
Indeed, the graph of $f$ is computable in $h\oplus\emptyset'$.
We show that $K(f)\subT h\oplus\emptyset'$.
To compute $K(f)(e)$, first use $h\oplus\emptyset'$ to recover $(a_k)_{k< e}$ and the graph of $f\upto a_{e-1}+1$.
Ask $\emptyset'$ which of the following is true:

\begin{enumerate}
\item $\Phi_e[f\upto a_{e-1}+1](e)$ makes a query $\leq a_{e-1}$ that is different from any of $\{a_k\}_{k<e}$, or
makes a query that is greater than $a_{e-1}$.
\item $\Phi_e[f\upto a_{e-1}+1](e)$ halts.
\item Otherwise.
That is, $\Phi_e[f\upto a_{e-1}+1](e)$ does not halt even though it only makes queries in $\{a_k\}_{k<e}$.
\end{enumerate}

Since $(a_k)_{k< e}$ and $f\upto a_{e-1}+1$ are finite, this decision can be performed with the unrelativized halting problem $\emptyset'$.
By the above construction, if (1) holds, $K(f)(e)\uparrow$; if (2) holds, $K(f)(e)=1$; and if (3) holds, $K(f)(e)=0$.
This shows $K(f)\subT h\oplus\emptyset'$.

For quasiminimality, if $\Phi_e[f]$ is total, this computation can be simulated by $\Phi_e[f\upto a_{e-1}+1]$.
Otherwise, $\Phi_e[f\upto a_{e-1}+1](n)$ for some $n$ makes a query $q$ larger than $a_{e-1}$, but by the same argument as in Claim above, for $\sigma:=f\upto a_{e-1}+1$ is $\{a_k\}_{k<e}$, our construction ensures $a_{e-1}<q<r_\sigma\leq a_e$, which implies $q\not\in{\rm dom}(f)$.
Thus, $\Phi_e[f]$ is computable since this computation uses only finite information on $f$.
\end{proof}

\subsection{Minimal pair}
Next, let us analyze the properties of subTuring meets.
Turing degrees do not form a lattice, but there are several pairs of Turing degrees having a meet.
The following results lead to the conclusion that this Turing meet and the subTuring meet are never coincident.
In fact, every total degree is meet-irreducible within the total degrees:

\begin{theorem}\label{thm:no-minimal-pair}
For any total functions $f,g,h\colon\om\to\om$, if $f\cap g\subT h$ then either $f\subT h$ or $g\subT h$ holds.
\end{theorem}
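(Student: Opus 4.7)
The plan is to argue by contrapositive: assume $f \not\subT h$ and $g \not\subT h$, and derive $f\cap g \not\subT h$. Since $f,g,h$ are total, subTuring reducibility coincides with ordinary Turing reducibility for these functions, so oracle queries to $h$ always resolve and $\Psi[h]$ behaves as a standard partial $h$-computable function. Given any candidate partial computable $\Psi$, the goal is to exhibit a triple $(d,e,n) \in \dom(f \cap g)$ at which $\Psi[h](d,e,n)$ fails to equal $(f \cap g)(d,e,n)$.

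The first step is to examine the canonical reductions obtained from identity indices $d_f, d_g$ with $\Phi_{d_f}[f]=f$ and $\Phi_{d_g}[g]=g$. The partial $h$-computable function $\Gamma(n) := \Psi[h](d_f,d_g,n)$ must agree with the common value $f(n)=g(n)$ precisely on the coincidence set $E = \{n : f(n)=g(n)\}$, since it is exactly there that $(d_f,d_g,n)$ lies in $\dom(f\cap g)$. The assumption $f \not\subT h$ then forces $\Gamma$ to disagree with $f$ somewhere outside $E$ (or to diverge there), and similarly for $g$. More generally, for each value $v$, the constant-$v$ index $e_v$ on the $g$-side gives $(f\cap g)(d_f, e_v, n) = v$ iff $f(n)=v$, so the $h$-c.e.\ set $C_n := \{v : \Psi[h](d_f, e_v, n)\downarrow = v\}$ always contains $f(n)$; symmetrically $D_n := \{u : \Psi[h](d_u, d_g, n)\downarrow = u\}$ always contains $g(n)$.

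If $C_n$ were a singleton for cofinitely many $n$, $f$ would be $\subT h$ directly; so one must assume $C_n$ harbors spurious elements infinitely often, and similarly for $D_n$. The next step is to combine these candidate sets with reductions that simultaneously test pairwise hypotheses for $(f(n),g(n))$, via computations of the form ``check whether $f(n)=v_1$'' and ``check whether $g(n)=v_2$'' (which always halt since $f,g$ are total). The aim is to build, via the recursion theorem, indices $(d^*,e^*)$ whose computations $\Phi_{d^*}[f]$ and $\Phi_{e^*}[g]$ are engineered self-referentially with respect to $\Psi$ so as to agree on a value that $\Psi[h]$ cannot match from $h$ alone, producing the desired failure.

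The main obstacle is this last step: $\Psi[h]$ is entirely unconstrained off $\dom(f \cap g)$, so off-domain convergences can a priori mimic any pattern and cannot be discarded on purely syntactic grounds. One must show that the ability of $\Psi$ to correctly extend $(f \cap g)(d_f,e_v,\cdot)$ and $(f\cap g)(d_u,d_g,\cdot)$ simultaneously for every $u,v$ already forces enough information about $f$ or $g$ into $h$ to contradict either $f \not\subT h$ or $g \not\subT h$. Making this precise will likely require either a careful deployment of the anti-cupping framework of Lemma~\ref{lem:strong-anti-cupping}, with an immune set that captures the ``safe'' witness indices, or a new diagonalization exploiting the total-function hypothesis to isolate one of $f,g$ among all candidates consistent with $h$.
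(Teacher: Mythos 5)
Your plan correctly identifies the central obstacle, and that obstacle is in fact fatal to the route you have laid out; the proposal as written has a genuine gap and is not a complete proof. The difficulty is not merely technical. When you build $C_n=\{v:\Psi[h](d_f,e_v,n)\downarrow=v\}$ and $D_n=\{u:\Psi[h](d_u,d_g,n)\downarrow=u\}$, you obtain two uniformly $h$-c.e.\ families with $f(n)\in C_n$ and $g(n)\in D_n$ for all $n$. But an $h$-c.e.\ trace for $f$ (or $g$) carries no contradiction with $f\not\subT h$ unless you can also bound or thin the traces. The adversary $\Psi$ is free to converge to arbitrary values at every triple $(d_f,e_v,n)$ with $v\ne f(n)$, because those triples lie outside $\dom(f\cap g)$ and so the hypothesis $f\cap g\subseteq\Psi[h]$ imposes no constraint there. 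Since the indices $d_f,d_g,e_v,d_u$ are fixed in advance, $\dom(f\cap g)$ restricted to them is entirely determined by $f,g$, and you have no handle to shrink it or steer the adversary's off-domain behaviour. The recursion-theorem idea of ``engineering $d^*,e^*$ self-referentially with respect to $\Psi$'' gestures in the right direction but is left entirely unfleshed, and the appeal to Lemma~\ref{lem:strong-anti-cupping} does not connect: that lemma concerns immune sets and anticupping and does not address spurious off-domain convergence.

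The paper closes exactly this gap by abandoning the natural reductions from $f$ and $g$ and instead \emph{constructing} partial functions $\Psi^f\subT f$ and $\Delta^g\subT g$ whose domains the diagonalizer controls. One then tries to defeat the requirement $\Psi^f\cap\Delta^g\subseteq\Phi_e^h$ for each $e$. The domain of $\Psi^f\cap\Delta^g$ at a test cell $(e,s,t,i,j)$ is decided dynamically: $\Psi^f(e,s,t,i,j)$ and $\Delta^g(e,s,t,i,j)$ are set to $0$ or $1$ depending on $f(s)$ and $g(t)$, and the strategy waits for $\Phi_e^h$ to commit. Depending on when and whether $\Phi_e^h$ converges at a given cell, the strategy either produces a direct disagreement on the meet's domain, or (when it waits forever after case (3)) reveals $f(s)=1-i_{s,t}$ via an $h$-computable observation, or (when case (5) keeps firing) reveals $g(t)=1-j_{s_t,t}$ via an $h$-computable observation. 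The trichotomy in the verification (disagreement vs.\ $s\mapsto f(s)$ $h$-computable vs.\ $t\mapsto g(t)$ $h$-computable) is precisely what your approach was missing: by owning the witnessing reductions, the construction forces the adversary's convergence pattern to leak one of the two functions into $h$. To repair your proposal you would essentially need to replicate this: replace $d_f,d_g,e_v$ with indices of self-built partial functions whose domains you can enlarge and withdraw in response to $\Psi[h]$'s behaviour, which is the content of the paper's round/substrategy machinery.
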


\begin{proof}
We construct Turing functionals $\Psi$ and $\Delta$ fulfilling the following requirement:
\[R_e\colon (\forall f,g\in 2^\om)\;\Psi^f\cap\Delta^g\subseteq\Phi_e^h\implies f\subT h\mbox{ or }g\subT h.\]

Here $\Psi^f\cap\Delta^g$ is the intersection of $\Psi^f$ and $\Delta^g$ when identifying a function with its graph.
That is, if (and only if) both $\Psi^f(n)$ and $\Delta^g(n)$ converge to the same value, $(\Psi^f\cap\Delta^g)(n)$ halts at the value.
Obviously, $\Psi^f\cap\Delta^g\subT f\cap g$, so it is sufficient to satisfy this requirement.
Rewriting $R_e$ a little more concretely according to the actual strategy, we will ensure the following:
\[R_e\colon (\forall f,g\in 2^\om)\;[(\exists n)\;{\Psi^f(n)\downarrow}={\Delta^g(n)\downarrow}\not=\Phi_e^h(n)]\mbox{ or }f\subT h\mbox{ or }g\subT h.\]

\noindent
{\em Strategy:}
Let us describe the $R_e$-strategy.
The strategy must operate without knowing what $f$ and $g$ are and with all possibilities of $f,g\in 2^\om$ in mind.
This strategy consists of infinitely many substrategies.
The $(s,t)$-th substrategy proceeds as follows:
\begin{enumerate}
\item 
For any $f$ with $f(s)=i$, put $\Psi^f(e,s,t,i,j)=0$.
Similarly, for any $g$ with $g(t)=j$, put $\Delta^g(e,s,t,i,j)=0$.
\item 
Wait for $\Phi_e^h(e,s,t,i,j)\downarrow=0$ for some $i,j<2$.

If no such $i,j$ exists, then proceed to the next stage $e+1$.
In this case, $\Psi^f(e,s,t,f(s),g(t))=\Delta^g(e,s,t,f(s),g(t))=0\not=\Phi_e^h(e,s,t,f(s),g(t))$, so the requirement $R_e$ is satisfied.

If such $i$ and $j$ exist, fix the first pair $i,j$ found.
\item For any $f$ with $f(s)=1-i$, put $\Psi^f(e,s,t,i,j)=1$.
Similarly, for any $g$ with $g(t)=1-i$, put $\Delta^g(e,s,t,i,j)=1$.

Note that for any $f,g$ with $\langle f(s),g(t)\rangle=\langle 1-i,1-j\rangle$ then $\Psi^f(e,s,t,i,j)=\Delta^g(e,s,t,i,j)=1\not=0=\Phi_e^h(e,s,t,i,j)$, so the requirement $R_e$ is satisfied.
\item Wait for $\Phi^h_e(e,s,t,1-i,j)\downarrow=0$.
While waiting, invoke the $\langle s+1,t\rangle$-th substrategy.

If we wait forever, for any $f,g$ with $\langle f(s),g(t)\rangle=\langle 1-i,j\rangle$, then by (1), $\Psi^f(e,s,t,1-i,j)=\Delta^g(e,s,t,1-i,j)=0\not=\Phi_e^h(e,s,t,1-i,j)$, so the requirement $R_e$ is satisfied.
Thus, together with (3) we see that $R_e$ is satisfied for any value of $g(t)$ when $f(s)=1-i$.

\item Suppose that, after waiting for a while, $\Phi^h_e(e,s,t,1-i,j)\downarrow=0$ occurs.
While waiting, the $R_e$-strategy is still running, so suppose that the substrategy currently in operation is $(s',t)$ (the $(s,t)$-strategy will be abandoned when $t$ increases, so we may assume that the second coordinate remains at $t$).

Once we recognize that we reach (5), if $f(s)=i$, then put $\Psi^f(e,s,t,1-i,j)=1$; similarly, if $g(s)=1-j$, then put $\Delta^g(e,s,t,1-i,j)=1$.
Then stop the currently running substrategy and start a new substrategy $(s',t+1)$.

In this case, if $\langle f(s),g(s)\rangle=\langle i,1-j\rangle$, then $\Psi^f(e,s,t,1-i,j)=\Delta^g(e,s,t,1-i,j)=1\not=0=\Phi_e^h(e,s,t,1-i,j)$, so the requirement $R_e$ is satisfied.
Together with (3), we see that $R_e$ is satisfied for any value of $f(s)$ when $g(t)=1-j$.
However, since the fulfillment of the requirement in (4) with $\langle f(s),g(s)\rangle=\langle 1-i,j\rangle$ has been withdrawn, $R_e$ is not satisfied when $g(t)=j$.
\end{enumerate}

\noindent
{\em Verification:}
Let us check that the requirement $R_e$ is satisfied by this construction.

\medskip
\noindent
{\em Case 1:}
If we wait forever for (2) in either substrategy, the requirement $R_e$ is clearly satisfied.

Hereafter, we write $i$ and $j$ chosen in the state (2) of the $(s,t)$-substrategy as $i_{s,t}$ and $j_{s,t}$, respectively.
Note that $(s,t)\mapsto i_{s,t},j_{s,t}$ is $h$-computable since the substrategy proceeds in an $h$-computable process.

\medskip
\noindent
{\em Case 2:}
Consider the case where all substrategies pass through (3) but $t$ converges to a finite value.
In this case, for some $s_0$, the substrategies after $(s_0,t)$ do not reach (5) but wait at (4) and continue to activate the next substrategy.
As described in (4), the requirement $R_e$ is satisfied for any value of $g(t)$ when $f(s)=1-i_{s,t}$.
That is, the requirement $R_e$ is not yet satisfied only for $f(s)=i_{s,t}$ for any $s>s_0$.
However, in this case, as $s\mapsto i_{s,t}=f(s)$ is $h$-computable, one derives $f\subT h$.

\medskip
\noindent
{\em Case 3:}
Consider the case where $t$ diverges to infinity.
In this case, for each $t$, (5) occurs for $j=j_{s_t,t}$ defined by some substrategy $(s_t,t)$.
As described in (5), the requirement $R_e$ is satisfied for any value of $f(s_t)$ when $g(t)=1-j_{s_t,t}$.
That is, the requirement $R_e$ is not yet satisfied only for $g(t)=j_{s_t,t}$ for any $t$.
Note that one can $h$-computably recognize what $j_{s_t,t}$ is by waiting for the occurrence of (5), so $t\mapsto j_{s_t,t}$ is $h$-computable.
Thus, in this case, $g\subT h$ is derived and the requirement $R_e$ is satisfied.
\end{proof}

\begin{cor}
The meet of two incompatable total subTuring degrees is nontotal.
\end{cor}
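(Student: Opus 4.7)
The plan is to derive the corollary directly from Theorem \ref{thm:no-minimal-pair} together with the basic property that the meet sits below each operand. Suppose for contradiction that we have incomparable total subTuring degrees $\mathbf{a}$ and $\mathbf{b}$ whose meet $\mathbf{a}\wedge\mathbf{b}$ is total, witnessed by a total function $h$ of that degree. Fix total representatives $f\in\mathbf{a}$ and $g\in\mathbf{b}$. Then $h\eqsubT f\cap g$, and in particular $f\cap g\subT h$.

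Now I would apply Theorem \ref{thm:no-minimal-pair} to the triple of total functions $f,g,h$: the hypothesis $f\cap g\subT h$ forces $f\subT h$ or $g\subT h$. By symmetry assume $f\subT h$. On the other hand, since $f\cap g$ is the greatest lower bound of $f$ and $g$ in the subTuring lattice (Proposition \ref{prop:subTuring-lattice}), we have $f\cap g\subT g$, and therefore $h\subT f\cap g\subT g$. Combining, $f\subT h\subT g$, contradicting the assumption that $\mathbf{a}$ and $\mathbf{b}$ are incomparable.

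There is no real obstacle here beyond packaging the previous theorem correctly; the only thing to be a little careful about is that Theorem \ref{thm:no-minimal-pair} requires \emph{all three} of $f$, $g$, $h$ to be total, which is exactly what our contradiction hypothesis supplies. No further construction or choice of $f,g$ is needed beyond taking total representatives of the given total degrees.
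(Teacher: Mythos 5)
Your proof is correct and is exactly the intended argument; the paper states this corollary without proof as an immediate consequence of Theorem \ref{thm:no-minimal-pair}, and your packaging (choose total representatives, apply the theorem to get $f\subT h$ or $g\subT h$, and combine with $h\subT f\cap g\subT g$ to contradict incomparability) is the natural reading.
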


This leads to the observation that, although the existence of a minimal pair is well known for Turing degrees, it does not form a minimal pair in the subTuring degrees.
Here, if a lattice $L$ has a bottom element $0$, a pair $(a,b)$ of elements in $L$ is called a {\em minimal pair} if $a,b>0$ and $a\land b=0$.

\begin{cor}\label{cor:no-minimal-pair}
There exists no minimal pair of total subTuring degrees.
\end{cor}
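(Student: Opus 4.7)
The plan is to derive this as an immediate corollary of Theorem \ref{thm:no-minimal-pair}. Suppose for contradiction that $(\mathbf{a},\mathbf{b})$ is a minimal pair of total subTuring degrees, with total representatives $f \in \mathbf{a}$ and $g \in \mathbf{b}$. Then both $f$ and $g$ are non-computable (since $\mathbf{a},\mathbf{b} > 0$), and $f \cap g$ lies in the bottom subTuring degree.

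Next, I would choose any total computable function $h$, for instance the constant zero function. Since $f \cap g$ has subTuring degree $0$ and $h$ is total (hence has subTuring degree $\geq 0$), we have $f \cap g \subT h$. Now all three functions $f,g,h$ are total, so Theorem \ref{thm:no-minimal-pair} applies and yields $f \subT h$ or $g \subT h$. But $h$ is computable, so whichever of $f,g$ is subTuring reducible to $h$ must itself be computable, contradicting $\mathbf{a},\mathbf{b} > 0$.

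There is no real obstacle here; the entire content sits in Theorem \ref{thm:no-minimal-pair}, and the corollary is a one-line deduction once we observe that any total computable function is a legitimate choice of bounding total function $h$ in that theorem.
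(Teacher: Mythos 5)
Your proposal is correct and uses essentially the same route the paper intends: everything reduces to Theorem \ref{thm:no-minimal-pair}, instantiated with a computable total $h$, together with the observation that for total functions subTuring reducibility agrees with Turing reducibility (so $f\subT h$ with $h$ computable forces $f$ computable). The paper also records the intermediate corollary that the meet of two incomparable total degrees is nontotal, from which the nonexistence of a minimal pair follows since $\mathbf{0}$ is total; your argument bypasses that corollary and applies the theorem directly, but the two derivations are interchangeable one-liners.
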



\subsection{Distributivity}
A lattice is distributive if $a\lor (b\land c)=(a\lor b)\land (a\lor c)$.
We now give a proof of non-distributivity of the subTuring degrees.

\begin{theorem}\label{thm:subTuring-nondistributive}
The subTuring lattice is not distributive.
\end{theorem}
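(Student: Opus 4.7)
The plan is to produce three subTuring degrees violating the distributive law. I will construct two total functions $a, b \in 2^\omega$ by a priority argument, and set $c := a \oplus_{\mathrm{XOR}} b$ (pointwise XOR), so that $c \subT a \oplus b$ is automatic. By Kleene--Post diagonalization within the construction, I ensure that $a, b, c$ are pairwise Turing-incomparable (hence pairwise subTuring-incomparable as totals). Since $c$ is a common lower bound of $a \vee b$ and $c$ in the subTuring lattice, and no strictly greater common lower bound is possible, we get $(a \vee b) \wedge c = c$.

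The crux of the argument is to arrange, simultaneously with the incomparability requirements, that $c \not\subT (a \wedge c) \oplus (b \wedge c) = (a \wedge c) \vee (b \wedge c)$. Once this is in place, $(a \vee b) \wedge c = c >_{\mathrm{subT}} (a \wedge c) \vee (b \wedge c)$, violating distributivity. For each candidate subTuring reduction witnessing $c \subT (a \wedge c) \oplus (b \wedge c)$---coded by a tuple $(\Psi, (d_0, e_0), (d_1, e_1))$ where the two meets are accessed via functionals $(\Phi_{d_i}, \Phi_{e_i})$---the priority argument extends $a$ and $b$ to force $\Psi$ either to make an undefined query to the partial meet or to produce an incorrect value of $c$. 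The anti-cupping Lemma \ref{lem:strong-anti-cupping} is the key tool: by maintaining that the common-computation set $\{(d, e, n) : \Phi_d[a](n) = \Phi_e[c](n) \downarrow\}$ stays suitably immune relative to the approximations, any attempted reduction of a total function to $(a \wedge c) \oplus (b \wedge c)$ is forced to collapse to a computable one, which contradicts $c$ being non-computable.

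The main obstacle is the tight coupling $c = a \oplus_{\mathrm{XOR}} b$: every extension of $a$ at a given position $n$ simultaneously determines $c(n)$ through the current $b(n)$, so the priority argument cannot independently adjust $a, b, c$. A finite-injury framework with a careful linear ordering of requirements should suffice, but the verification that each quasiminimality-style requirement is permanently satisfied will require a delicate application of the anti-cupping lemma, exploiting the fact that any c.e.~subset of the alignment set that could support a reduction must already be finite by immunity. I expect the technical heart of the proof to be the management of this two-level diagonalization, with the XOR coupling forcing a few extra case splits beyond the standard Kleene--Post pattern.
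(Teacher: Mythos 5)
Your high-level reduction is correct: with $c \subT a\oplus b$ one has $(a\vee b)\wedge c = c$, so establishing $c\not\subT(a\wedge c)\oplus(b\wedge c)$ would indeed break distributivity. The fatal problem is the proposed use of Lemma~\ref{lem:strong-anti-cupping}. The set you single out, $\{(d,e,n):\Phi_d[a](n)\downarrow=\Phi_e[c](n)\downarrow\}$, is exactly ${\rm dom}(a\cap c)$, and it can never be immune relative to any oracle: taking $d_0$ an index for a functional that outputs $0$ without querying, $(d_0,d_0,n)\in{\rm dom}(a\cap c)$ for every $n$, which already gives an infinite computable subset. So the immunity hypothesis of the lemma is unsatisfiable here, and the claimed ``collapse to a computable reduction'' does not follow. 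There are secondary obstacles as well: the lemma needs $\beta$ total, whereas $b\cap c$ is partial; and the finite-injury scaffold you invoke would need some way of approximating ${\rm dom}(a\cap c)$ at finite stages, but whether $(d,e,m)\in{\rm dom}(a\cap c)$ is a $\Sigma^0_1$-fact about the completed oracles $a$ and $c$, not something decidable from a finite piece of the construction.

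This is also a genuinely different route from the paper's. The paper builds three \emph{partial} functions $f,g,h$ outright and shows $(f\oplus g)\cap(f\oplus h)\not\subT f\oplus(g\cap h)$ by a direct stage-by-stage diagonalization; the engine of that argument is precisely the freedom to declare values of $f$, $g$, $h$ undefined at chosen points, forcing the $\Phi_e$-computation to hang on a query outside the oracle's domain. By insisting that $a,b,c$ all be total (with partiality arising only inside the two meets), you remove that lever, and the meets $a\cap c$, $b\cap c$ are too rigid an object to diagonalize against with the tools you list. If you want to pursue the total-side version you would need an entirely new mechanism for forcing undefined queries into the meets, and it is not clear that this can be done; the paper sidesteps the difficulty by working with partial witnesses from the start.
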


\begin{proof}
We construct partial functions $f\pcolon\om\times 2\times 2\to\om$ and $g,h\pcolon\om\to 2$ such that $(f\oplus g)\cap (f\oplus h)\not\subT f\oplus(g\cap h)$.
It suffices to fulfill the following requirement:
\[R_e\colon (f\oplus g)\cap (f\oplus h)\not\subseteq\Phi_e[f\oplus (g\cap h)].\]

Given $e$, we need to find $c,d,x$ such that ${((f\oplus g)\cap (f\oplus h))(c,d,x)\downarrow}\not=\Phi_e[f\oplus (g\cap h)](c,d,x)$.
Given $n$, consider the following Turing functionals $\Psi_n$ and $\Gamma_n$:
\begin{align*}
\Psi_n[f\oplus g](x)=f(n,g(n),0)& &\Gamma_n[f\oplus h](x)=f(n,h(n),1)
\end{align*}

If we write indices of $\Psi_n$ and $\Gamma_n$ as $c_n$ and $d_n$, respectively, we actually guarantee the following requirement:
\[R_e\colon\exists n,x\;[{\Psi_n[f\oplus g](x)\downarrow}={\Gamma_n[f\oplus h](x)\downarrow}\not=\Phi_e[f\oplus (g\cap h)](c_n,d_n,x)].\]

By the definition of the meet $\cap$, it is easy to see that this modified requirement implies the first proposed requirement.

\medskip
\noindent
{\em Strategy:}
At stage $e$, operate the strategy to satisfy the requirement $R_e$.
Inductively, assume that we have constructed finite functions $f_e,g_e,h_e$ and an auxiliary parameter $n_e$.
We also assume that, for any $t<n_e$, $f_e(t,i,j)$, $g_e(t)$ and $h_e(t)$ have already been determined, and for any $t\geq n_e$, $f_e(t,i,j)$, $g_e(t)$ and $h_e(t)$ are undefined.
We refer to a triple of the form $\langle t,i,j\rangle$ as a level $t$ input.
Now let $c$ and $d$ be indices of $\Psi:=\Psi_{n_e}$ and $\Gamma:=\Gamma_{n_e}$, respectively.

This strategy consists of infinitely many rounds.
First, regarding the computation of $\Phi_e[f\oplus(g\cap h)](c,d,x)$, we have the following three possibilities.
\begin{enumerate}
\item[(1a)] $\Phi_e[f\oplus(g\cap h)](c,d,x)$ does not make a query, or makes a query only to $f$, which is of level $<n_e$.
\item[(1b)] $\Phi_e[f\oplus(g\cap h)](c,d,x)$ makes a level $\geq n_e$ query to $f$.
\item[(1c)] $\Phi_e[f\oplus(g\cap h)](c,d,x)$ makes a query to $g\cap h$.
\end{enumerate}

Here, in the $(k+1)^{st}$ round, we ask about the subsequent queries after the query taken up in the $k^{th}$ round.
As we will see later, no action is taken when advancing a round, always leaving room for the same action as in the first round.
Therefore, follow the discussion as if we were in the first round.

\medskip
\noindent
{\em Case (1a)}:
Put $g_{e+1}(n_e)=h_{e+1}(n_e)=0$, and set the level $n_e$ values of $f_{e+1}$ appropriately so that $f_{e+1}(n_e,g_{e+1}(n_e),0)\downarrow=f_{e+1}(n_e,h_{e+1}(n_e),1)$, whose value is different from $\Phi_e[f_e\oplus(g_e\cap h_e)](c,d,x)$.
Put $n_{e+1}=n_e+1$, and proceed to stage $e+1$.

\medskip
\noindent
{\em Verification}:
Since $\Phi_e[f_e\oplus(g_e\cap h_e)](c,d,x)$ only makes level $<n_e$ queries to $f$, the behavior of this computation coincides with $\Phi_e[f\oplus(g\cap h)](c,d,x)$.
By the action of the strategy, we have $\Psi[f\oplus g](x)=f(e,g(n_e),0)=f(e,h(n_e),1)=\Gamma[f\oplus h](x)\downarrow$, whose value is different from $\Phi_e[f\oplus(g\cap h)](c,d,x)$.
Thus, the requirement $R_e$ is satisfied.

\medskip
\noindent
{\em Case (1b)}:
There is a query to $f$ of the form $(m,i,j)$ for some $m\geq n_e$.
Declare $f_{e+1}(m,i,j)\uparrow$.
Put $g_{e+1}(n_e)=h_{e+1}(n_e)=1-i$.
Since $(m,i,j)\not=(n_e,g_{e+1}(n_e),0)$ and $(m,i,j)\not=(n_e,h_{e+1}(n_e),1)$, one can set the level $n_e$ values of $f_{e+1}$ so that $f_{e+1}(n_e,g_{e+1}(n_e),0)\downarrow=f_{e+1}(n_e,h_{e+1}(n_e),1)$.
Put $n_{e+1}=m+1$, and proceed to stage $e+1$.

\medskip
\noindent
{\em Verification}:
$\Phi_s[f\oplus (g\cap h)](d,e,x)$ makes a query $(m,i,j)\not\in{\rm dom}(f)$, so the computation does not halt.
We also have $\Psi[f\oplus g](x)=f(n_e,g(n_e),0)=f(n_e,h(n_e),1)=\Gamma[f\oplus h](x)\downarrow$.
Thus, the requirement $R_e$ is satisfied.

\medskip
\noindent
{\em Case (1c)}:
The computation makes a query $(a,b,y)$ to $g\cap h$.
By the definition of the meet $g\cap h$, the oracle's response to this query is the common value of $\Phi_a[g](y)$ and $\Phi_b[h](y)$ (if defined).
Let us now go into the computation processes of $\Phi_a[g](y)$ and $\Phi_b[h](y)$.
We have the following possibilities:
\begin{enumerate}
\item[(2a)] Either $\Phi_a[g](y)$ or $\Phi_b[h](y)$ does not halt, but makes only small queries less than $n_e$.
\item[(2b)] Either $\Phi_a[g](y)$ or $\Phi_b[h](y)$ halts after making only small queries less than $n_e$.
\item[(2c)] Either $\Phi_a[g](y)$ or $\Phi_b[h](y)$ makes a query larger than $n_e$.
\item[(2d)] None of the above; that is, both $\Phi_a[g](y)$ and $\Phi_b[h](y)$ make a query $n_e$.
\end{enumerate}

\medskip
\noindent
{\em Case (2a)}:
Put $g_{e+1}(n_e)=h_{e+1}(n_e)=0$, and set the values appropriately so that $f_{e+1}(n_e,g_{e+1}(n_e),0)\downarrow=f_{e+1}(n_e,h_{e+1}(n_e),1)$.
Put $n_{e+1}=n_e+1$, and proceed to stage $e+1$.

\medskip
\noindent
{\em Verification}:
Recall that we are in Case (1c), where $\Phi_e[f\oplus (g\cap h)](c,d,x)$ makes a query $(a,b,y)$ to $g\cap h$, and we then enter Case (2a), where $\Phi_a[g_e](c)$ or $\Phi_b[h_e](c)$ is undefined, so $(g\cap h)(a,b,y)$ is also undefined.
This means that $\Phi_e[f\oplus (g\cap h)](c,d,x)$ is accessing outside the domain of the oracle, so the computation does not halt.
By our action, we have $\Psi[f\oplus g](x)=f(n_e,g(n_e),0)=f(n_e,h(n_e),1)=\Gamma[f\oplus h](x)\downarrow$, so the requirement $R_e$ is satisfied.

\medskip
\noindent
{\em Case (2b)}:
In this case, the common value of $\Phi_a[g](c)$ and $\Phi_b[h](c)$ is determined by the current finite information $g_e$ or $h_e$.
Then, proceed to the next round $k+1$; that is, resume the computation of $\Phi_e[f\oplus (g\cap h)](c,d,x)$, and check the conditional branch (1) again for the subsequent queries of the query $(a,b,y)$ taken up this time.

\medskip
\noindent
{\em Case (2c)}:
If $\Phi_a$ makes a query $m>n_e$ to $g$, then declare $g(m)\uparrow$.
Do the same for $\Phi_b$ and $h$.
Put $g_{e+1}(n_e)=h_{e+1}(n_e)=0$, and set the values appropriately so that $f_{e+1}(n_e,g_{e+1}(n_e),0)\downarrow=f_{e+1}(n_e,h_{e+1}(n_e),1)$.
Put $n_{e+1}=n_e+1$, and proceed to stage $e+1$.

\medskip
\noindent
{\em Verification}:
Recall that we are in Case (1c), where $\Phi_e[f\oplus (g\cap h)](c,d,x)$ makes a query $(a,b,y)$ to $g\cap h$, and we then enter Case (2c), where $\Phi_a$ makes a query $m>n_e$ to $g$.
Then our action ensures $m\not\in{\rm dom}(g)$, so we have $\Phi_a[g](c)\uparrow$.
Do the same for the symmetric case.
If $\Phi_a[g_e](c)$ or $\Phi_b[h_e](c)$ is undefined, then $(g\cap h)(a,b,y)$ is also undefined, which means that $\Phi_e[f\oplus (g\cap h)](c,d,x)$ is accessing outside the domain of the oracle, so the computation does not halt.
By our action, we have $\Psi[f\oplus g](x)=f(n_e,g(n_e),0)=f(n_e,h(n_e),1)=\Gamma[f\oplus h](x)\downarrow$, so the requirement $R_e$ is satisfied.

\medskip
\noindent
{\em Case (2d)}:
In this case, both $\Phi_a[g](y)$ and $\Phi_b[h](y)$ make only queries less than or equal to $n_e$.
Therefore, depending on the values of $g(n_e)$ and $h(n_e)$, at most four different computations appear.
Looking at these four computations, we have the following possibilities:
\begin{enumerate}
\item[(3a)] Either $\Phi_a[g](y)$ or $\Phi_b[h](y)$ may not halt, depending on the value of $g(n_e)$ or $h(n_e)$.
\item[(3b)] Either $\Phi_a[g](y)$ or $\Phi_b[h](y)$ may make a query greater than $n_e$, depending on the value of $g(n_e)$ or $h(n_e)$.
\item[(3c)] $\Phi_a[g](y)$ or/and $\Phi_b[h](y)$ have the possibility of halting computations of different outputs, depending on the value of $g(n_e)$ or/and $h(n_e)$.
\item[(3d)] Otherwise, that is, no matter what values $g(n_e)$ and $h(n_e)$ are set to, $\Phi_a[g](y)$ and $\Phi_b[h](y)$ both halts and return the same output, making only queries less than or equal to $n_e$.
\end{enumerate}

\medskip
\noindent
{\em Case (3a)}:
For instance, if setting $g_{e+1}(n_e)=i$ forces $\Phi_a[g_{e+1}](y)$ not to halt, then set $h_{e+1}(n_e)$ to any value.
Do the same for the symmetric case.
Set the values appropriately so that $f_{e+1}(n_e,g_{e+1}(n_e),0)\downarrow=f_{e+1}(n_e,h_{e+1}(n_e),1)$.
Put $n_{e+1}=n_e+1$, and proceed to stage $e+1$.

\medskip
\noindent
{\em Case (3b)}:
For instance, if setting $g_{e+1}(n_e)=i$ forces $\Phi_a[g_{e+1}](y)$ to make a query $m>n_e$, then declare $g_{e+1}(m)\uparrow$, and set $h_{e+1}(n_s)$ to any value.
Do the same for the symmetric case.
Set the values appropriately so that $f_{e+1}(n_e,g_{e+1}(n_e),0)\downarrow=f_{e+1}(n_e,h_{e+1}(n_e),1)$.
Put $n_{e+1}=n_e+1$, and proceed to stage $e+1$.

\medskip
\noindent
{\em Case (3c)}:
If setting $g_{e+1}(n_e)=i$ and $h_{e+1}(n_e)=j$ force the condition $\Phi_a[g](y)\not=\Phi_b[h](y)$, then set the values appropriately so that $f_{e+1}(n_e,g_{e+1}(n_e),0)\downarrow=f_{e+1}(n_e,h_{e+1}(n_e),1)$.
Put $n_{e+1}=n_e+1$, and proceed to stage $e+1$.

Otherwise, put $g^i_{e+1}(n_e)=i$ for each $i<2$, and assume $\Phi_a[g^0_{e+1}](y)\not=\Phi_a[g^1_{e+1}](y)$.
Then put $h_{e+1}(n_e)=0$.
Assuming that (3a) does not hold, $\Phi_b[h_{e+1}](y)$ must halt, so $\Phi_b[h_{e+1}](y)\not=\Phi_a[g^i_{e+1}](y)$ for some $i<2$.
Then put $g_{e+1}=g^i_{e+1}$ for such $i$.
Do the same for the symmetric case.
Set the values appropriately so that $f_{e+1}(n_e,g_{e+1}(n_e),0)\downarrow=f(n_e,h_{e+1}(n_e),1)$.
Put $n_{e+1}=n_e+1$, and proceed to stage $e+1$.

\medskip
\noindent
{\em Verification}:
The requirement $R_e$ is satisfied in the Case (3a) for the same reason as (2a), and the Case (3b) for the same reason as (2c).
The same reasoning can be applied to the Case (3c), since we ensure $\Phi_a[g_{e+1}](y)\not=\Phi_b[h_{e+1}](y)$, which implies $(a,b,y)\not\in{\rm dom}(g\cap h)$.

\medskip
\noindent
{\em Case (3d)}:
In this case, the common value of $\Phi_a[g](y)$ and $\Phi_b[h](y)$ is determined by the current finite information $g_e$ or $h_e$ (since the computations ask only for values of $g\upto n_e+1$ and $h\upto n_e+1$ by $\neg$(2c), and the results of these computations do not depend on the values of $g(n_e)$ and $h(n_e)$).
Then, proceed to the next round $k+1$; that is, resume the computation of $\Phi_e[f\oplus (g\cap h)](c,d,x)$, and check the conditional branch (1) again for the subsequent queries of the query $(a,b,y)$ taken up this time.

\medskip
\noindent
{\em Verification}:
The next round proceeds only if the strategy reaches (2b) or (3d).
In this case, the response to the query $(a,b,y)$ made at (1c) can be computed only from the information of $g_e=g\upto n_e$ and $h_e=h\upto n_e$.
Moreover, the strategy does not take any action at (2b) or (3c), so nothing interferes with the next round of operation.
Also, if the strategy arrives at a state other than (2b) or (3d) at some round, then the requirement $R_e$ is satisfied there.

\medskip
\noindent
{\em Infinite rounds}:
If the strategy keeps arriving only at (2b) or (3d), it means that the round keeps going infinitely, and this also means, in particular that (1c) is passed through infinitely many times.
Then, the computation of $\Phi_e[f\oplus (g\cap h)](c,d,x)$ continues to make queries to $g\cap h$ infinitely many times, so $\Phi_e[f\oplus (g\cap h)](c,d,x)$ does not halt.
In this case, put $g_{e+1}(n_e)=h_{e+1}(n_e)=0$, and set the values appropriately so that $f_{e+1}(n_e,g_{e+1}(n_e),0)\downarrow=f_{e+1}(n_e,h_{e+1}(n_e),1)$.
Put $n_{e+1}=n_e+1$, and proceed to stage $e+1$.
Then the requirement $R_e$ is satisfied for the same reason as (1a).
\end{proof}

A lattice $L$ is {\em modular} if, for any $a,b,c\in L$, $a\leq b$ implies $a\lor(b\land c)=b\land (a\lor c)$.
Obviously, a distributive lattice is always modular.
By making a small modification to the proof of Theorem \ref{thm:subTuring-nondistributive}, it is actually possible to prove that the subTuring lattice is not modular.
\begin{theorem}\label{thm:subTuring-nonmodular}
The subTuring lattice is not modular.
\end{theorem}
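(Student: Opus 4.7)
The plan is to adapt the proof of Theorem \ref{thm:subTuring-nondistributive}. Non-modularity requires three elements $a \leq b$ and $c$ with $a \lor (b \land c) < b \land (a \lor c)$ (the reverse inequality is the modular inequality, which holds in every lattice). Take $a$ represented by $f$, $b$ by $f \oplus g$, and $c$ by $h$, so $a \leq b$ trivially, and the desired strict inequality becomes
\[
(f \oplus g) \cap (f \oplus h) \not\subT f \oplus ((f \oplus g) \cap h).
\]
Accordingly, I would construct $f \pcolon \om \times 2 \times 2 \to \om$ and $g, h \pcolon \om \to 2$ satisfying
\[
R_e\colon (f \oplus g) \cap (f \oplus h) \not\subseteq \Phi_e[f \oplus ((f \oplus g) \cap h)],
\]
using the same witnessing functionals $\Psi_n[f \oplus g](x) = f(n, g(n), 0)$ and $\Gamma_n[f \oplus h](x) = f(n, h(n), 1)$ as before to force the left-hand meet to be defined on the diagonal inputs, so that $R_e$ reduces to an $\exists n, x$ disagreement with $\Phi_e$.

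The strategy of Theorem \ref{thm:subTuring-nondistributive} is then rerun nearly verbatim, modifying only case (1c), in which the outer computation $\Phi_e[f \oplus ((f \oplus g) \cap h)](c_n, d_n, x)$ queries the meet $(f \oplus g) \cap h$ at some $(a, b, y)$. The oracle's response, when defined, is now the common value of $\Phi_a[f \oplus g](y)$ and $\Phi_b[h](y)$. The novelty compared to the distributivity proof is that $\Phi_a$ may query $f$ as well as $g$. I would therefore augment the inner case split by one new subcase: if $\Phi_a[f \oplus g](y)$ queries $f$ at a coordinate of level $\geq n_e$, declare that coordinate of $f$ undefined, exactly as the outer case (1b) was handled. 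This forces $\Phi_a[f \oplus g](y) \uparrow$, hence $(a, b, y) \notin \dom((f \oplus g) \cap h)$, whence the outer $\Phi_e$ diverges, while the intended assignment $f(n_e, g_{e+1}(n_e), 0) = f(n_e, h_{e+1}(n_e), 1)$ still makes $\Psi_{n_e}[f \oplus g](x) = \Gamma_{n_e}[f \oplus h](x)$ converge and satisfies $R_e$. Queries of $\Phi_a[f \oplus g](y)$ to $f$ at level $< n_e$ are already determined by the committed $f_e$, so they are absorbed into the original analysis, with $f_e$ supplying a fixed finite advice and $g$ playing exactly the role it did there.

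All other branches, including the multi-round structure and the infinite-rounds wrap-up that forces the outer computation to diverge by issuing infinitely many meet queries, are handled exactly as in Theorem \ref{thm:subTuring-nondistributive}. The main obstacle I anticipate is purely bookkeeping: when we undefine $f$ at a level $> n_e$ to kill an inner query, we must still be able to set $f(n_e, \cdot, 0)$ and $f(n_e, \cdot, 1)$ to a common value and push $n_{e+1}$ past every freshly undefined position. Since undefined positions always lie at levels strictly greater than $n_e$, this imposes no conflict; we simply take $n_{e+1}$ to exceed every level touched during stage $e$.
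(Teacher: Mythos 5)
Your proposal matches the paper's proof up to the symmetric relabeling $g\leftrightarrow h$: the paper targets $(f\oplus g)\cap(f\oplus h)\not\subT f\oplus(g\cap(f\oplus h))$ while you target $(f\oplus g)\cap(f\oplus h)\not\subT f\oplus((f\oplus g)\cap h)$, and the key modification you identify --- extending the inner case split so that $f$-queries of level $\geq n_e$ inside $\Phi_a[f\oplus g](y)$ are killed exactly as in the outer Case (1b) --- is precisely how the paper modifies Case (2c). One small correction: your final paragraph asserts the freshly undefined positions lie at levels strictly greater than $n_e$, but a level-$n_e$ $f$-query can occur; this is harmless because (as you note earlier, and as Case (1b) already does) one picks $g_{e+1}(n_e)$ or $h_{e+1}(n_e)$ to steer the diagonalization coordinates $(n_e,\cdot,0)$, $(n_e,\cdot,1)$ away from the undefined slot.
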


\begin{proof}[Proof (Sketch)]
We construct partial functions $f\pcolon\om\times 2\times 2\to\om$ and $g,h\pcolon\om\to 2$ such that $(f\oplus g)\cap (f\oplus h)\not\subT f\oplus(g\cap h)$.
It suffices to fulfill the following requirement:
\[R_e\colon (f\oplus g)\cap (f\oplus h)\not\subseteq\Phi_e[f\oplus (g\cap (f\oplus h))].\]

Given $e$, we need to find $c,d,x$ such that ${((f\oplus g)\cap (f\oplus h))(c,d,x)\downarrow}\not=\Phi_e[f\oplus (g\cap h)](c,d,x)$.
For the construction, the description becomes a little more complicated, but the idea of the proof is exactly the same as Theorem \ref{thm:subTuring-nondistributive}.
For example, item (1c) is modified as follows.
\begin{enumerate}
\item[(1c)]
$\Phi_e(f\oplus (g\cap (f\oplus h)))$ makes a query to $g\cap(f\oplus h)$.
\end{enumerate}

If this happens, assume that the computation makes a query $(a,b,y)$ to $g\cap (f\oplus h)$.
Then, go into the computation processes of $\Phi_a[g](y)$ and $\Phi_b[f\oplus h](y)$.
The discussion that follows is exactly the same as Theorem \ref{thm:subTuring-nondistributive}, but for example, item (2c) is modified as follows.
\begin{enumerate}
\item[(2c)] The computation of $\Phi_a[g](y)$ or $\Phi_b[f\oplus h](y)$ makes either a query to $g$ or $h$ which is larger than $n_e$, or a query to $f$ which is of level $\geq n_e$.
\end{enumerate}

Note that a query to $f$ may be at level $n_e$.
In this case, the requirement is satisfied by exactly the same action as in Case (1b).
If there is a query to $g$ or $h$ that is greater than $n_e$, the situation is the same as Case (2c) in Theorem \ref{thm:subTuring-nondistributive}.

In Case (2d), both $\Phi_a[g](y)$ and $\Phi_b[f\oplus h](y)$ can only make queries to $g$ and $h$ which are less than or equal to $n_e$ and queries to $f$ of level less than $n_e$.
Therefore, the situation is exactly the same as Case (2d) in Theorem \ref{thm:subTuring-nondistributive}, so the exact same argument applies.
\end{proof}

While the structure is not a distributive lattice, we can nonetheless find incomparable subTuring degrees that satisfy the distributive law:

\begin{prop}
There are partial functions $f_0,f_1,f_2\pcolon\om\to\om$ such that their subTuring degrees are pairwise incomparable, and $f_0\oplus(f_1\cap f_2)\eqsubT(f_0 \oplus f_1)\cap(f_0\oplus f_2)$.

Similarly, there are partial functions $e_0,e_1,e_2\pcolon\om\to\om$ such that their subTuring degrees are pairwise incomparable, and ${e_0}\cap({e_1}\oplus{e_2})=({e_0}\cap{e_1})\oplus({e_0}\cap{e_2})$.

\end{prop}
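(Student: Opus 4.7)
In any lattice, $f_0 \oplus (f_1 \cap f_2) \subT (f_0 \oplus f_1) \cap (f_0 \oplus f_2)$ and $(e_0 \cap e_1) \oplus (e_0 \cap e_2) \subT e_0 \cap (e_1 \oplus e_2)$, so only the reverse inequalities require work. The plan is to construct a single triple $f_0, f_1, f_2 \pcolon \om \to \om$ (which will also serve as $e_0, e_1, e_2$) of mutually $1$-generic partial functions via a Cohen-style finite-extension argument, achieving both identities at once. Pairwise subTuring-incomparability is arranged by standard diagonalization, as in Proposition~\ref{prop:quasiminimal-subTuring}: for each $i \ne j$ and each index $k$, make $f_i(n) \ne \Phi_k[f_j](n)$ at some fresh input $n$; this diagonalization is dense and does not conflict with the other requirements.

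The heart of the argument is a subTuring analogue of the Cohen meet lemma: for mutually $1$-generic $f_1, f_2$ over $f_0$, any partial function $h$ with $h \subT f_0 \oplus f_1$ and $h \subT f_0 \oplus f_2$ satisfies $h \subT f_0$. Given $h \subseteq \Phi[f_0 \oplus f_1]$ and $h \subseteq \Psi[f_0 \oplus f_2]$ for partial computable $\Phi, \Psi$, I define an $f_0$-computable $\Theta[f_0](n)$ that searches for finite strings $\sigma_1, \sigma_2$ with $\Phi[f_0 \oplus \sigma_1](n) \downarrow = \Psi[f_0 \oplus \sigma_2](n) \downarrow$ and outputs this common value. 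The density requirement, which I enforce during the construction, is that whenever such witnesses exist they produce a unique value equal to the true $h(n)$, so that $\Theta[f_0]$ extends $h$. Granted the meet lemma, $(f_0 \oplus f_1) \cap (f_0 \oplus f_2) \subT f_0 \subT f_0 \oplus (f_1 \cap f_2)$, establishing the first identity. Applied with the same triple (setting $e_i = f_i$), the meet lemma also collapses $f_0 \cap (f_1 \oplus f_2)$ and each $f_0 \cap f_i$ to the least subTuring degree, yielding the second identity.

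The main obstacle is enforcing uniqueness of the finite-witness value underlying $\Theta[f_0]$: a pair $(\sigma_1, \sigma_2)$ inconsistent with the true $(f_1, f_2)$ could in principle make both computations converge to a spurious common value. The construction will handle this by examining, at each stage processing a tuple $(d, e, n)$, the set of values $v$ obtainable as $\Phi_d[f_0 \oplus \sigma_1](n) = \Phi_e[f_0 \oplus \sigma_2](n)$ from finite extensions $\sigma_1, \sigma_2$ of the current commitments on $f_1, f_2$; if two distinct such values are possible, extend $f_0, f_1, f_2$ so that the relevant computations diverge (forcing $(d, e, n)$ out of the meet), and otherwise commit to the unique value. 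Since there are only countably many tuples, a finite-injury priority argument will combine this bookkeeping with the incomparability diagonalization.
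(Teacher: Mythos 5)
Your approach is genuinely different from the paper's, but it rests on a step that does not go through. The paper never constructs mutually generic partial functions; instead it starts from an arbitrary subTuring-incomparable pair $g,h$ with $g\cap h$ noncomputable, sets $f_0 = g\cap h$, $f_1 = g\upto A$, $f_2 = h\upto B$ for suitably immune $A,B$, and observes that both $f_0\oplus(f_1\cap f_2)$ and $(f_0\oplus f_1)\cap(f_0\oplus f_2)$ collapse to $f_0$ by pure order-theoretic bookkeeping ($f_1\cap f_2\subT g\cap h$, $f_0\oplus f_1\subT g$, $f_0\oplus f_2\subT h$). A separate triple $e_0=f_1\oplus f_2$, $e_1=g$, $e_2=h$ handles the dual identity, with both sides collapsing to $e_0$. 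No minimal-pair or genericity machinery is used at all.

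The gap in your plan is the ``subTuring Cohen meet lemma.'' The subTuring meet $\cap$ behaves very differently from the Turing infimum --- this is exactly the point of Theorem~\ref{thm:no-minimal-pair} --- and mutual genericity does \emph{not} make it collapse. Concretely, take $\Phi_d$ and $\Phi_e$ to be the projections with $\Phi_d[f_0\oplus\sigma_1](n)=\sigma_1(n)$ and $\Phi_e[f_0\oplus\sigma_2](n)=\sigma_2(n)$. Your $\Theta[f_0](d,e,n)$ then searches over all finite strings $\sigma_1,\sigma_2$ for a common value $\sigma_1(n)=\sigma_2(n)$ and outputs the first one found; since it never queries $f_0$, this is a fixed computable function $\theta(n)$. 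For $\Theta[f_0]$ to extend the meet you would need $f_1(n)\downarrow=f_2(n)\downarrow$ to force $f_1(n)=\theta(n)$; but this contradicts $1$-genericity, which guarantees infinitely many $n$ with $f_1(n)\downarrow=f_2(n)\downarrow\ne\theta(n)$. Your proposed fix --- enforcing uniqueness of the value over finite extensions \emph{of the current commitments} on $f_1,f_2$ --- does not repair this, because $\Theta$'s search ranges over \emph{all} finite strings $\sigma_1,\sigma_2$, including ones incompatible with the committed conditions, and the construction cannot see or kill those witnesses. So the claim $h\subT f_0$ fails already for the meet $h(n)=f_1(n)$ when $f_1(n)\downarrow=f_2(n)\downarrow$, and the entire chain $(f_0\oplus f_1)\cap(f_0\oplus f_2)\subT f_0$ as well as $f_0\cap f_i\eqsubT\emptyset$ is unsupported. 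Indeed, whether \emph{any} minimal pair of nonzero partial subTuring degrees exists is not established by your sketch, and Theorem~\ref{thm:no-minimal-pair} shows that the intuition imported from Turing degrees is misleading here; the paper's construction sidesteps the question entirely.
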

\begin{proof}
We claim that given any subTuring incomparable partial functions $g,h$ such that $g\cap h\not\equiv_{\rm subT} \emptyset$, we can find sets $A$ and $B$ such that 
\begin{enumerate}
\item $g\upto A\nleq_{\rm subT} h$ and $h\upto B\nleq_{\rm subT} g$.
\item $g \nleq_{\rm subT} (g\upto A)\oplus (h\upto B)$ and $ h \nleq_{\rm subT} (g\upto A)\oplus (h\upto B)$. 
\item $g\cap h \nleq_{\rm subT} g\upto A$ and $g\cap h \nleq_{\rm subT} h\upto B$.
\end{enumerate}

If these conditions are satisfied, we can take ${f_0}= g\cap h $, ${f_1}=  g\upto A $ and ${f_2}=  h\upto B $ and take ${ e_0}= { f_1}\oplus { f_2}$, ${ e_1}=  g $ and ${ e_2}=  h $.
The condition (1) implies $f_1,f_2\not\subT f_0$, and the condition (3) implies that $f_0\not\subT f_1,f_2$ and $f_1$ is incomparable with $f_2$.
Hence, $f_0,f_1,f_2$ are pairwise subTuring incomparable.
The condition (2) implies $e_1,e_2\not\subT e_0$, and the condition (1) implies that $e_0\not\subT e_1,e_2$ and $e_1$ is incomparable with $e_2$.
Hence, $e_0,e_1,e_2$ are pairwise subTuring incomparable.

Now, we have $f_1\cap f_2=(g\upto A)\cap (h\upto B)\subT g\cap h=f_0$, so $f_0\oplus (f_1\cap f_2)\eqsubT f_0$.
Moreover, $f_0\subT f_0\oplus f_1=(g\cap h)\oplus(g\upto A)\subT g$, and similarly $f_0\subT f_0\oplus f_2\subT h$.
Hence, $f_0\subT(f_0\oplus f_1)\cap(f_0\oplus f_2)\subT g\cap h=f_0$.
This shows $f_0\oplus(f_1\cap f_2)\eqsubT f_0\eqsubT(f_0 \oplus f_1)\cap(f_0\oplus f_2)$.

Next, we have $e_0=(g\upto A)\oplus(h\upto B)\subT g\oplus h=e_1\oplus e_2$, so $e_0\cap(e_1\oplus e_2)\eqsubT e_0$.
Moreover, $g\upto A\subT((g\upto A)\oplus(h\upto B))\cap g=e_0\cap e_1\subT e_0$, and similarly, $h\upto B\subT e_0\cap e_2\subT e_0$.
Hence, $e_0=(g\upto A)\oplus (h\upto B)\subT (e_0\cap e_1)\oplus(e_0\cap e_2)\subT e_0$.
This shows ${e_0}\cap({e_1}\oplus{e_2})\eqsubT e_0\eqsubT ({e_0}\cap{e_1})\oplus({e_0}\cap{e_2})$.

In order to show the claim, we need to construct $A\upto s$ and $B\upto s$ for each $s$; assume that these have been decided for $s$, and our next step is to satisfy a requirement of type (1), (2) or (3) above.

To satisfy (1) we pick $n>s$ such that $n\in {\rm dom}(g)$ and $g(n)\neq\Phi[h](n)$, where $\Phi$ is the next Turing functional we are diagonalizing against. This $n$ must exist lest $g\leq_{\rm subT} h$. We then set $A\upto n+1=\left(A\upto s\right)\cup\{n\}$, which will ensure that $g\upto A\not\subseteq \Phi[h]$. We act similarly to make $h\upto B\not\subseteq \Phi[g]$.

To satisfy (2) we check if there is some $x\in{\rm dom}(g)$ such that the computation $\Phi[\left(g\upto \left(A\upto s\right)\right)\oplus \left(h\upto \left(B\upto s\right)\right)](x)$ makes a query to the oracle $g(n)$ or $h(n)$ for some $n>s$.
If no, then $g\subseteq \Phi[{\left(g\upto A\right)\oplus \left(h\upto B\right)}]$ implies that $g\subseteq \varphi$ for some partial computable function simulating $\Phi$ with the given finite oracle, which is impossible. If the answer is yes, witnessed by some least $n>s$, we take $A\upto n+1=\left(A\upto s\right)$ and $B\upto n+1=\left(B\upto s\right)$, in particular setting $n\not\in A\cup B$, and therefore ensuring $g\not\subseteq \Phi[{\left(g\upto A\right)\oplus \left(h\upto B\right)}]$. We do similarly to make $h\not\subseteq \Phi[{\left(g\upto A\right)\oplus \left(h\upto B\right)}]$.

To satisfy (3) we check if there is some $x\in {\rm dom}(g\cap h)$ such that the computation $\Phi[{g\upto (A\upto s)}](x)$ queries $g(n)$ for some $n>s$. If not, then just like in (2) above, we have that $g\cap h\not\subseteq\Phi[{g\upto A}]$, unless $g\cap h\equiv_{\rm subT}\emptyset$. If yes, then we just keep $n\not\in A$ by taking  $A\upto n+1=\left(A\upto s\right)$ which will ensure that $g\cap h\not\subseteq \Phi[{g\upto A}]$. We proceed similarly to make $g\cap h\not\subseteq \Phi[{h\upto B}]$. In fact, note that we can replace $g\cap h$ by any function which is  $\not\equiv_{\rm subT} \emptyset$.
\end{proof}

\subsection{Irreducibility}

For a lattice $L$, an element $a\in L$ is {\em join-irreducible} if, for any $b,c\in L$, $a=b\lor c$ implies either $a=b$ or $a=c$.

\begin{theorem}\label{thm:subTuring-join-irreducible}
There exists a nonzero join-irreducible subTuring degree.
\end{theorem}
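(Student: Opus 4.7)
The plan is a finite-extension forcing construction with conditions being finite partial functions $\sigma\pcolon\om\to\om$ under graph extension. I build $f=\bigcup_s\sigma_s$ meeting, in a priority listing, two families of requirements:
\begin{itemize}
\item $N_i$: $f\not\subseteq\Phi_i$ for each $i$, ensuring $f$ has no partial computable extension and hence $f$ has nonzero subTuring degree;
\item $R_{d,e,k}$: if $f\subseteq\Phi_k[\Phi_d[f]\oplus\Phi_e[f]]$ on $\dom(f)$, then $f\subT\Phi_d[f]$ or $f\subT\Phi_e[f]$.
\end{itemize}
The $N_i$-steps are met by routine diagonalization. Using the monotonicity $g\subseteq g'\Rightarrow\Phi[g]\subseteq\Phi[g']$, any decomposition $g\oplus h\eqsubT f$ with $g\subseteq\Phi_d[f]$, $h\subseteq\Phi_e[f]$, and $f\subseteq\Phi_k[g\oplus h]$ gives $f\subseteq\Phi_k[\Phi_d[f]\oplus\Phi_e[f]]$, so $(R)$ yields $\Phi_d[f]\eqsubT f$ or $\Phi_e[f]\eqsubT f$; a supplementary bookkeeping requirement---that the construction be ``irredundant'' in the sense that every $g\subT f$ is subTuring equivalent to some $\Phi_d[f]$---then lifts this to $g\eqsubT f$ or $h\eqsubT f$, delivering join-irreducibility of $[f]$.

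The substance is each $R_{d,e,k}$-step, which relies on a splitting dichotomy at the current condition $\sigma$: either (i) some extension $\tau\supseteq\sigma$ has $n\in\dom(\tau)$ at which the candidate computation $\Phi_k[\Phi_d[\tau]\oplus\Phi_e[\tau]](n)$ either converges to a value $\neq\tau(n)$ or is permanently blocked by an oracle query that no future extension can answer, in which case we adopt $\tau$; or (ii) $\sigma$ already forces a partial computable $\Theta$ so that $f\subseteq\Theta[\Phi_d[f]]$ (or symmetrically $\Theta[\Phi_e[f]]$) on $\dom(f)$ for every $f\supseteq\sigma$ arising later, in which case we commit to the designated side. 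The reduction $\Theta$ in (ii) is the natural certificate-search functional: given oracle $\alpha$ and input $n$, $\Theta$ searches for finite $\rho_d\supseteq\Phi_d[\sigma]$ compatible with $\alpha$ and finite $\rho_e\supseteq\Phi_e[\sigma]$ with $\Phi_k[\rho_d\oplus\rho_e](n)\!\downarrow$, and outputs that value.

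The main obstacle is proving this dichotomy. Correctness of $\Theta$---that every output equals $f(n)$---follows from the failure of (i): two inconsistent certificates can be glued, via monotonicity, into a single extension of $\sigma$ realising the disagreement in (i). Totality of $\Theta$ on $\dom(f)$ is the harder half, since the actual computation may interleave queries to both oracle sides. The intended pigeonhole-style argument, exploiting the sequential-access semantics of subTuring reductions, is that if neither the $d$-side nor the $e$-side alone admits certificates covering $\dom(f)$, the uncovered inputs can be merged into an extension of $\sigma$ forcing (i), a contradiction. Formalising this interaction analysis, and arranging that commitments made in case (ii) remain compatible with later requirements in the priority bookkeeping, is the technical heart of the proof.
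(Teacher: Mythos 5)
Your high-level setup --- diagonalizing $f$ to noncomputability and, for each triple of functionals, arranging that $f\subseteq\Phi_k[g\oplus h]$ with $g\subseteq\Phi_d[f]$, $h\subseteq\Phi_e[f]$ implies $f\subT g$ or $f\subT h$ --- matches the paper's framework, and the idea of analysing the first ``split'' query is also the paper's. But there are two real gaps, and the second is fatal to the argument as stated.

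First, the ``irredundancy'' requirement (every $g\subT f$ is subTuring-equivalent to some $\Phi_d[f]$) is unachievable: by Theorem~\ref{thm:dense-continuum} any nonzero degree bounds continuum many subTuring degrees, whereas there are only countably many $\Phi_d[f]$. It is also the wrong fix. Once you locate the query $\tilde m$ to the $g$-side at which the two hypothetical computations (for $f(n)=0$ versus $f(n)=1$) first disagree, the hypothesis $f\subseteq\Phi_j[g\oplus h]$ forces the actual computation to resolve $\tilde m$ whenever $n\in\dom(f)$; hence $\tilde m\in\dom(g)$ and $g(\tilde m)$ itself determines $f(n)$. The paper exploits exactly this to get $f\subT g$ directly, never needing to recover $g$ from $\Phi_d[f]$; your detour through $\Phi_k[\Phi_d[f]\oplus\Phi_e[f]]$ and back cannot be closed, since $g$ may be a proper restriction of $\Phi_d[f]$ of strictly lower degree.

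Second, your splitting dichotomy (either extend to diagonalize, or $\sigma$ forces a single-oracle reduction covering $\dom(f)$) is not a dichotomy, and the pigeonhole closure you sketch --- ``if neither side alone admits certificates covering $\dom(f)$, the uncovered inputs can be merged to force~(i)'' --- is simply false. The computation $\Phi_k[g\oplus h]$ can interleave queries so that on an infinite set of inputs $f(n)$ is pinned by a $g$-side query and on a complementary infinite set by an $h$-side query; then each input is individually determined and no diagonalization is possible, yet neither oracle alone covers $\dom(f)$, so neither horn of your dichotomy holds. The paper's resolution is not a dichotomy at all: for each potential coding position $n_t$ it records whether the split query $\tilde m_t$ lands on the $g$-side or the $h$-side, partitions positions accordingly, and then \emph{restricts the future domain of $f$ to the infinite side of that partition} (the set $I_{s+1}$ in the construction), thereby forcing a uniform side after the fact by discarding the dissenting positions. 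Your conditions are finite partial functions under graph extension, with no device for permanently undefining an infinite set of future positions, and you did not identify the need for one; this domain-thinning step is the missing idea.
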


\begin{proof}
We construct a noncomputable partial function $f$ such that for any $g,h\subT f$ if $f\subT g\oplus h$ then either $f\subT g$ or $f\subT h$ holds.
It suffices to fulfill the following requirements:
\begin{align*}
P_e\colon&\ f\not\subseteq\varphi_e.\\
R_{e,k,j}\colon&\ \mbox{If $g\subseteq\Phi_e[f]$ and $h\subseteq\Phi_k[f]$ and $f\subseteq\Phi_j[g\oplus h]$}\\
&\mbox{ then $f\subT g$ or $f\subT h$.}
\end{align*}

At the beginning of the stage $s$, assume that a computable infinite set $I_s\subseteq\om$ has been constructed, and $f\upto\min I_s$ has been determined.
That is, $f\upto\min I_s$ is guaranteed not to change after stage $s$.

\medskip
\noindent
{\em $P_e$-strategey:}
Assume $s=2e$.
For $n=\min I_s$, one can choose the value of $f(n)$ so that $f(n)\downarrow\not=\varphi_e(n)$.
Put $I_{s+1}=\{x\in I_s:x>n\}$.
Then, the values of $f$ up to $n$ will be protected.
The requirement $P_e$ is clearly satisfied.

\medskip
\noindent
{\em $R_{e,k,j}$-strategy:}
Assume $s=2\langle e,k,j\rangle+1$.
This strategy consists of infinitely many substrategies.
The $t^{th}$ substrategy works with the $t$-th smallest element $n_t$ of $I_s$.
In particular, $n_0=\min I_s$, and $f\upto n_0$ has already been determined.
As long as we focus only on this one substrategy, it is somewhat similar to the strategy of Theorem \ref{thm:subTuring-nondistributive}.
In particular, the $t^{th}$ substrategy itself also consists of infinitely many rounds.
First, regarding the computation of $\Phi_j[{g\oplus h}](n_t)$, we have the following possibilities.
\begin{enumerate}
\item[(1a)] $\Phi_j[g\oplus h](n_t)$ is undefined without making a query.
\item[(1b)] $\Phi_j[g\oplus h](n_t)$ halts without making a query.
\item[(1c)] $\Phi_j[g\oplus h](n_t)$ makes a query for $g$ or $h$.
\end{enumerate}

Here, in the $k^{th}$ round, we ask about the $k^{th}$ query during the computation of $\Phi_j[g\oplus h](n_t)$, but the details will be described later, so at first, follow the discussion as if we were in the first round.

\medskip
\noindent
{\em Case (1a):}
In this case, by setting $f(n_t)\downarrow$ one can guarantee that $f(n_t)\not=\Phi_j[g\oplus h](n_t)$.
Here, if it is not the first round, set $f(n_t)\downarrow$ to be a value consistent with the current assumption on $f(n_t)$ (made by the previous rounds).
Put $I_{s+1}=\{x\in I_s:x>n_t\}$, and proceed to stage $s+1$.

\medskip
\noindent
{\em Case (1b):}
If we are in the first round, the computation halts without ever creating a query, so the value of $\Phi_j[f\oplus g](n_t)$ is constant regardless of what oracle $f\oplus g$ is.
In this case, by setting $f(n_t)\downarrow$ to be an appropriate value, we can guarantee that $f(n_t)\not=\Phi_j[g\oplus h](n_t)$.
Put $I_{s+1}=\{x\in I_s:x>n_t\}$ and proceed to stage $s+1$.

If the round has already progressed, there may already be a restriction on the value of $f(n_t)$, which may prevent diagonalization, so the action in this case is discussed in detail below.

\medskip
\noindent
{\em Case (1c):}
Since the argument is symmetric, assume that the computation have made a query to $g$.
If the assumption $g\subseteq\Phi_e[f]$ is satisfied, then the solution of the query $m$ to $g$ should be obtained from the solution of $\Phi_e[f](m)$.
In this case, we have the following possibilities.
\begin{enumerate}
\item[(2a)] $\Phi_e[f](m)$ does not halt, but makes only small queries less than $n_0$.
\item[(2b)] $\Phi_e[f](m)$ halts after making only small queries less than $n_0$.
\item[(2c)] $\Phi_e[f](m)$ makes a query $n'\geq n_0$ with $n'\not=n_t$.
\item[(2d)] None of the above; that is, $\Phi_e[f](m)$ makes a query $n_t$.
\end{enumerate}

\medskip
\noindent
{\em Case (2a):}
In this case, $m\not\in{\rm dom}(g)$ is ensured, so $\Phi_j[{g\oplus h}](n_t)\uparrow$.
As in the Case (1a), by setting $f(n_t)\downarrow$ one can guarantee that $f(n_t)\not=\Phi_j[g\oplus h](n_t)$.
Here, set $f(n_t)\downarrow$ to be a value consistent with the current assumption on $f(n_t)$ (made by the previous rounds).
Put $I_{s+1}=\{x\in I_s:x>n_t\}$ and proceed to stage $s+1$.

\medskip
\noindent
{\em Case (2b):}
It means that the solution to the query $m$ for $g$ has already been determined from the current information in $f\upto n_0$.
In this case, proceed to round $k+1$ of the $t^{th}$ substrategy.

\medskip
\noindent
{\em Case (2c):}
By setting $f(n')\uparrow$ one can force $g(m)\uparrow$, which also forces $\Phi_j[{g\oplus h}](n_t)\uparrow$.
Since $n'\not=n_t$, as in the Case (1a), one can set $f(n_t)\downarrow$ to guarantee that $f(n_t)\not=\Phi_j[g\oplus h](n_t)$.
Here, set $f(n_t)\downarrow$ to be a value consistent with the current assumption on $f(n_t)$ (made by the previous rounds).
Put $I_{s+1}=\{x\in I_s:x>n_t,n'\}$ and proceed to stage $s+1$.

\medskip
\noindent
{\em Case (2d):}
In this case, the computation may depend on the value of $f(n_t)\in\{0,1\}$, so the case is divided again.
\begin{enumerate}
\item[(3a)] $\Phi_e[f](m)$ may make a query $n'\geq n_0$ with $n'\not=n_t$, depending on the value of $f(n_t)$.
\item[(3b)] $\Phi_e[f](m)$ may not halt, depending on the value of $f(n_t)$.
\item[(3c)] Otherwise, that is, $\Phi_e[f](m)$, for any value of $f(n_t)$, halts after only making a query $n'<n_0$ or $n'=n_t$.\end{enumerate}

Here, if the value of $f(n_t)$ has already been determined, we only look at that value.

\medskip
\noindent
{\em Cases (3a) and (3b):}
Set $f(n_t)$ to the corresponding value.
In this case, we perform the same action as in the Cases (2c) and (2a), respectively, and then proceed to stage $s+1$.

\medskip
\noindent
{\em Case (3c):}
In this case, proceed to round $k+1$ of the $t^{th}$ substrategy.
However, in this case, the computation of $\Phi_e[f](m)$ may depend on the value of $f(n_t)$, so we perform the round $k+1$ when $f(n_t)=0$ and when $f(n_t)=1$ in parallel.

\medskip
\noindent
{\em Next rounds:}
The next round proceeds only if the substrategy reaches (2b) or (3c).
If the substrategy keeps arriving only at (2b) or (3c) for some value of $f(n_t)$, it means that the round keeps going infinitely, and this also means, in particular that (1c) is passed through infinitely many times.
In this case, the computation of $\Phi_j[g\oplus h](n_t)$ continues to make queries infinitely many times, so $\Phi_j[g\oplus h](n_t)$ does not halt.
Thus, by setting $f(n_t)\downarrow$ one can guarantee that $f(n_t)\not=\Phi_j[g\oplus h](n_t)$.
Here, set $f(n_t)\downarrow$ to be a value consistent with the current assumption on $f(n_t)$.
Put $I_{s+1}=\{x\in I_s:x>n_t\}$, and proceed to stage $s+1$.

\medskip

Note that if, for some value of $f(n_t)$, the substrategy eventually reaches a state other than (1b), (2b), or (3c), then we proceed to the next stage, after satisfying the requirement.
Also, as we saw above, if we only arrive at (2b) or (3c) for some value of $f(n_t)$, we also satisfy the requirement and proceed to the next stage.
The only remaining case is when we arrive at (1b) no matter how we choose the value of $f(n_t)$.
It is necessary to describe the action at (1b) after the round has progressed.

\medskip
\noindent
{\em Case (1b):}
After the round has progressed, the computation may be asking for the value of $f(n_t)$.
At this time, there are two different possibilities in the Case (1b).
\begin{enumerate}
\item[(4a)] The value of $\Phi_j[g\oplus h](n_t)$ does not depend on whether $f(n_t)=0$ or $f(n_t)=1$.
\item[(4b)] The value of $\Phi_j[g\oplus h](n_t)$ depends on whether $f(n_t)=0$ or $f(n_t)=1$.
\end{enumerate}

\medskip
\noindent
{\em Case (4a):}
In this case, one can choose the value of $f(n_t)$ so that $f(n_t)\not=\Phi_e[g\oplus h](n_t)$.
Put $I_{s+1}=\{x\in I_s:x>n_t\}$, and proceed to stage $s+1$.

\medskip
\noindent
{\em Case (4b):}
In this case, start running the $(t+1)^{st}$ substrategy.

\medskip

Let us analyze the situation when (4b) occurs.
First, note that the behavior of each substrategy does not depend on the behavior of the other substrategies.
In other words, since the behavior of the $t^{th}$ substrategy depends only on the values of $f\upto n_0$ and $f(n_t)$, in fact, one can operate all substrategies in parallel.
Furthermore, if (4b) occurs, then the computation $\Phi_j[g\oplus h](n_t)$ only makes queries $n\in{\rm dom}(f)$ with $n<n_0$ or $n=n_t$.
Now, for each $i<2$, let $f_i$ be an extension of $f\upto n_0$ such that $f_i(n_t)=i$.
Dependently, the values of $g$ and $h$ also change, so that $\hat{g}_i=\Phi_e[f_i]$ and $\hat{h}_i=\Phi_k[f_i]$.

Note that (4b) occurs because the solution of a query differs somewhere in the $\Phi_j$-computations for $f(n_t)=0$ and for $f(n_t)=1$.
That is, for the queries $m_0,m_1,\dots$ during the computation of $\Phi_j[\hat{g}_0\oplus\hat{h}_0](n_t)$ and the queries $m_0',m_1',\dots$ during the computation of $\Phi_j[\hat{g}_1\oplus\hat{h}_1](n_t)$, since we are running the same computation $\Phi_j$, these queries must be the same until the very first difference on the solutions occurs.
That is, there exists some $k(t)$ such that $m_i=m_i'$ for each $i\leq k(t)$, but the solutions for $m_{k(t)}=m_{k(t)}'$ are different.
For instance, if this is a query to $g$, then $\hat{g}_0(m_{k(t)})\not=\hat{g}_1(m_{k(t)}')$.
Let us now define $\tilde{m}_t:=m_{k(t)}$.
The point is that if $\hat{g}=\Phi_e[f]$ and $\hat{h}=\Phi_k[f]$, then one can recover the value of $f(n_t)$ from the value of $\hat{g}(\tilde{m}_t)$ or $\hat{h}(\tilde{m}_t)$.
Moreover, even if $g\subseteq\hat{g}$ and $h\subseteq\hat{h}$, if $\Phi_j[g\oplus h](n_t)\downarrow$ then the same computation is performed without accessing outside the domains of $g$ and $h$, so one can recover the value of $f(n_t)$ from the value of ${g}(\tilde{m}_t)$ or $h(\tilde{m}_t)$ in the same way.
This is the case, for example, when $n_t\in{\rm dom}(f)$ and $f\subT g\oplus h$ via $j$.

\medskip
\noindent
{\em Infinite substages:}
Recall that, if a substrategy does not reach (4b) for some value of $f(n_t)$, then we proceed to the next stage.
Therefore, the process to proceed to the next stage is not yet described if all substrategies $t$ reach (4b) for any value of $f(n_t)$.
Also, a new substrategy is activated only when it reaches (4b).
Note that no action is taken at this point.

If each substrategy $t$ reaches (4b), then we can use the information on the finite function $f\upto n_0$ to compute the sequences of queries $m_0,m_1,\dots$ and $m_0',m_1',\dots$, and eventually find a query $\tilde{m}_t$ such that $\hat{g}_0(\tilde{m}_t)\not=\hat{g}_1(\tilde{m}_t)$ or $\hat{h}_0(\tilde{m}_t)\not=\hat{h}_1(\tilde{m}_t)$.
This procedure $t\mapsto\tilde{m}_t$ is computable using only $f\upto n_0$.
Furthermore, for $\hat{g}=\Phi_e[f]$ and $\hat{h}=\Phi_k[f]$, the value of $f(n_t)$ is determined from $\hat{g}(\tilde{m}_t)$ or $\hat{h}(\tilde{m}_t)$.
Let $J_s\subseteq I_s$ be the set of locations $n_t$ such that $\tilde{m}_t$ is a query to the $g$-side and $\hat{g}_0(\tilde{m}_t)\not=\hat{g}_1(\tilde{m}_t)$.
If $J_s$ is infinite, put $I_{s+1}=J_s$, otherwise $I_{s+1}=I_s\setminus J_s$.
We then proceed to stage $s+1$.

\medskip
\noindent
{\em Verification:}
The requirement $P_e$ clearly ensures that $f$ is not computable.
Suppose now that $g,h\subT f$ and $f\subT g\oplus h$.
Then $g\subseteq\Phi_e[f]$, $h\subseteq\Phi_k[f]$ and $f\subseteq\Phi_j[g\oplus h]$ for some $e,k,j\in\om$.
If some substrategy of the $R_{e,k,j}$-strategy does not arrive at (4b), then $f\not\subseteq\Phi_j[g\oplus h]$ is guaranteed as discussed above, so all substrategies will always arrive at (4b).
Let $s=2\langle e,k,j\rangle+1$ and consider $n_0=\min I_s$.
By the construction of $f$, for any $n\in{\rm dom}(f)$, $n\geq n_0$ implies $n\in I_{s+1}$.
For any $n_t\in I_{s+1}$, one can compute $\tilde{m}_t$ using information on the finite function $f\upto n_0$.
As noted above, if $n_t\in{\rm dom}(f)$, then by our assumption that $f\subT g\oplus h$ via $j$, the value of $f(n_t)$ can be computed from $g(\tilde{m}_t)$ or $h(\tilde{m}_t)$.
If $n_t\in J_s$, then the former, and if $n_t\not\in J_s$, then the latter.
Thus, we obtain $f\subT g$ if $I_{s+1}=J_s$ and $f\subT h$ if $I_{s+1}=I_s\setminus J_s$.
\end{proof}

When translated into the context of realizability theory using Fact \ref{fact:realizability-topos}, this result means that there is a realizable subtopos of the effective topos that cannot be decomposed into two smaller realizability subtoposes (with respect to the ordering by geometric inclusions).

\begin{ack}
The authors wish to thank David Belanger for valuable discussions.
The first-named author was supported by JSPS KAKENHI (Grant Numbers 21H03392, 22K03401 and 23H03346).
The second-named author was supported by the Ministry of Education, Singapore, under its Academic Research Fund Tier 2 (MOE-T2EP20222-0018).
\end{ack}

\bibliographystyle{plain}
\bibliography{references}
\end{document}